\documentclass[sn-mathphys,Numbered]{sn-jnl}

\usepackage{graphicx,xcolor}%
\usepackage{multirow}%
\usepackage{amsmath,amssymb,amsfonts, upgreek}%
\usepackage{amsthm,nicefrac}%
\usepackage{mathrsfs}%
\usepackage[title]{appendix}%
\usepackage{xcolor}%
\usepackage{textcomp}%
\usepackage{manyfoot}%
\usepackage{booktabs}%
\usepackage[vlined, ruled, algo2e, linesnumbered]{algorithm2e}
\usepackage{listings}%
\usepackage{bm}
\usepackage{enumitem}
\usepackage{tabularx}
\usepackage{siunitx}

\usepackage{subcaption}
\usepackage{float}
\usepackage{placeins}

\usepackage{bm}
\usepackage{bbm}
\usepackage{mathtools}   
\usepackage{accents}
\usepackage{graphicx} 
\usepackage{comment}
\usepackage{xcolor}
\usepackage{multirow}
\usepackage{hyperref}
\usepackage{cleveref}

\usepackage{tikz}
\usepackage{pgfplots}
\pgfplotsset{compat=newest} 
\usepgfplotslibrary{groupplots}

\DeclareMathOperator*{\argmin}{arg\,min}

\newcommand{\R}{\mathbb R}

\makeatother
\def\theequation{\thesection.\arabic{equation}}
\newcommand{\bx}{{\bm x}}

\newcommand{\yd}{{y^{\delta}}}

\newcommand{\Q}{{\mathscr Q}}

\newcommand{\Csp}{{\mathscr C}}
\newcommand{\Qad}{{\mathscr Q_\mathsf{ad}}}
\newcommand{\qad}{{Q_{\mathsf{ad},h}}}
\newcommand{\qadK}{{Q_{\mathsf{ad},h}^K}}
\newcommand{\qadr}{{Q_{\mathsf{ad},r}}}

\newcommand{\qadri}{{Q^\iteridx_{\mathsf{ad},r}}}

\newcommand{\Sol}{\mathcal S}
\newcommand{\C}{{\mathcal C}}
\newcommand{\F}{{\mathcal F}}

\newcommand{\ulin}{\tilde{u}}
\newcommand{\plin}{\tilde{p}}
\newcommand{\iteridx}{{(i)}}
\newcommand{\errest}{\mathcal{R}^\iteridx}

\newcommand{\qtrial}{q^\iteridx_{\mbox{\tiny{trial}},r}}

\newcommand{\epsPODstate}{\epsilon_\text{POD,V}}

\newcommand{\qex}{q^\mathsf e}
\newcommand{\termidx}{i^*(\delta,\yd)}
\newcommand{\termidxh}{i_h^*(\delta,\yd)}

\newcommand{\errprim}[2]{#2{e}^{#1}}
\newcommand{\errprimdot}[2]{\dot{#2{e}}^{#1}}
\newcommand{\stateerrorest}[1]{\Delta^{\text{pr}}({#1})}
\newcommand{\tildestateerrorest}[1]{\tilde{\Delta}^{\text{pr}}({#1})}
\newcommand{\objerrorest}[2]{\Delta^{J}({#1, #2})}
\newcommand{\objerrorestsymb}{\Delta^{J}}
\newcommand{\opsensors}{\mathcal{C}_\text{sensors}}
\newcommand{\opall}{\mathcal{C}_\text{id}}
\newcommand{\IBVP}{\eqref{eq:ibvp_pde}--\eqref{eq:ibvp_initial}}
\newcommand{\relnoise}{\delta_{\mathrm{rel.}}}

\newcommand{\trace}[1]{\mathrm{tr}(#1)}
\newcommand{\frobprod}[2]{\langle\langle#1|#2\rangle\rangle}

\definecolor{DarkRed}{rgb}{0.5 0 0}
\definecolor{DarkBlue}{rgb}{0 0 0.5}
\definecolor{DarkGreen}{rgb}{0 0.5 0}
\definecolor{DarkOrange}{rgb}{0.83 0.33 0}
\definecolor{DarkMagenta}{rgb}{0.83 0 0.67}

\usepackage[acronym,nonumberlist]{glossaries}
\newacronym{SHM}{SHM}{Structural Health Monitoring}
\newacronym{PDE}{PDE}{Partial Differential Equation}
\newacronym{MOR}{MOR}{Model order reduction}
\newacronym{RB}{RB}{reduced bases}
\newacronym{IRGNM}{IRGNM}{Iteratively Regularized Gauss Newton Methods}
\newacronym{POD}{POD}{proper orthogonal decomposition}
\newacronym{FOM}{FOM}{full-order model}
\newacronym{ROM}{ROM}{reduced-order model}
\newacronym{TR}{TR}{trust-region}
\newacronym{ML}{ML}{machine learning}
\newacronym{HaPOD}{HaPOD}{hierarchical approximate POD}
\glsdisablehyper

\newtheoremstyle{Stil}{}{}{\itshape}{}{\bfseries}{.\\}{1ex}{}
\newtheoremstyle{Stil2}{}{}{\rmfamily}{}{\bfseries}{.}{1ex}{}
\theoremstyle{Stil}
\newtheorem{theorem}{Theorem}[section]

\theoremstyle{plain}
\newtheorem{remark}[theorem]{Remark}
\newtheorem{assumption}[theorem]{Assumption}

\theoremstyle{Stil2}

\crefname{assumption}{\textup{Assumption}}{\textup{Assumptions}}
\crefname{lemma}{\textup{Lemma}}{\textup{Lemmas}}
\crefname{theorem}{\textup{Theorem}}{\textup{Theorems}}
\crefname{remark}{\textup{Remark}}{\textup{Remarks}}
\crefname{example}{\textup{Example}}{\textup{Examples}}
\crefname{corollary}{\textup{Corollary}}{\textup{Corollaries}}
\crefname{subsection}{\textup{Section}}{\textup{Subsections}}
\crefname{section}{\textup{Section}}{\textup{Sections}}
\crefname{figure}{\textup{Figure}}{\textup{Figures}}
\crefname{table}{\textup{Table}}{\textup{Tables}}

\begin{document}

\title[Article Title]{
Adaptive Reduced-Basis Trust-Region Methods for Defect Identification in Elastic Materials
}


\author*[1]{\fnm{Benedikt} \sur{Klein}}\email{benedikt.klein@uni-muenster.de}

\author[1]{\fnm{Mario} \sur{Ohlberger}}\email{mario.ohlberger@uni-muenster.de}%

\author[2]{\fnm{Thomas} \sur{Schuster}}\email{thomas.schuster@num.uni-sb.de}%

\affil[1]{\orgdiv{Applied Mathematics: Institute for Analysis and Numerics}, \orgname{University of M\"unster}, \orgaddress{\street{Einsteinstr. 62}, \city{M\"unster}, \postcode{48149},  \country{Germany}}}
\affil[2]{\orgdiv{Department of Mathematics}, \orgname{Saarland University}, \orgaddress{\street{Campus}, \city{Saarbrücken}, \postcode{66123},  \country{Germany}}}


\date{\today}

\abstract{
Monitoring the integrity of elastic structures using ultrasonic waves requires the efficient identification of material parameters from measured surface displacements. The displacement field is governed by Cauchy's equation of motion, i.e., an elastic wave equation. Consequently, defect localization leads to a high-dimensional spatial parameter identification problem for a hyperbolic system with given initial and boundary conditions. Stable parameter reconstructions typically rely on regularization techniques such as the iteratively regularized Gauss--Newton method (IRGNM). However, its practical application is computationally demanding due to the high-dimensional nature of the problem.

To address this bottleneck, we propose a reduced-order modeling approach that simultaneously reduces the state and parameter spaces using adaptively constructed reduced-basis spaces. This yields online-efficient surrogate models for both the forward and adjoint evaluations required in derivative-based optimization. To ensure reliability, the IRGNM iteration is embedded into an adaptive, trust-region framework that provides accuracy of the reduced-order approximations. The approach extends our recent contributions in \cite{kartmann_adaptive_2023, kartmann2025adaptivereducedbasistrust}, which focus on elliptic and parabolic problems, to the hyperbolic setting. We demonstrate the reliability and effectiveness of the method for defect detection through numerical experiments.
}
\date{\today}

\keywords{parameter identification, model reduction, inverse problems, hyperbolic PDEs, Gauss-Newton methods}


\pacs[MSC Classification]{35R30, 35L10, 65M32}

\maketitle
\section{Introduction}
\label{sec:intro}

Detecting defects or impurities in (hyper)elastic materials is crucial in many industrial sectors, for example, in aerospace and biomedicine. In recent decades, a variety of \acrfull{SHM} systems have been developed, enabling non-destructive defect detection. These systems typically consist of a set of actuators and sensors attached to the surface of the structure, cf.~\cite{giurgiutiu2014structural,LAMBWAVES-BUCH:18}. The basic idea is that actuators generate waves that propagate through the material and interact with defects, for example, through reflection and scattering, before being measured by sensors. Based on these measurements, spatially distributed material parameters are reconstructed to infer the presence and location of defects. In this work, we restrict our attention to the reconstruction of the stored energy function of a linear elastic material characterized by a spatially distributed parameter field. In particular, our objective is to recover spatial variations of the Lamé parameters relative to a homogeneous background to identify deviations indicative of defects.

\textbf{Defect detection in elastic materials.}
We briefly review related approaches for defect detection and parameter identification in elasticity. Dynamic Load Monitoring (DLM), i.e., the reconstruction of time-dependent loads from measurement data in order to infer structural damage, constitutes an important class of inverse problems in structural health monitoring; see, e.g., \cite{YANG;ET;AL:20,JADAMBA;ET;AL:17, sun2020time, SOMAN;OSTACHOWICZ:19, PATEL;ET;AL:19, IELE;ET;AL:18, TASHAKORI;ET;AL:16}. General overviews on inverse elasticity and elasticity imaging are given in \cite{BONNET:05,fedele2024review, jankowski2009off, zhang2010identification}. Classical model order reduction approaches applied to structural health monitoring have been investigated in \cite{MR3736474,MR4064816} and recently generalized to a multi-fidelity surrogate modeling approach in \cite{Torzoni2023}.
In recent years, machine-learning-based approaches for the identification of elastic material parameters and constitutive relations have gained increasing attention, cf.~\cite{chen2023physics,ni2021deep,bhattacharya2025learning,bhattacharya2026optimal, Ghajari_2013}. At the same time, iterative methods for nonlinear inverse parameter identification in hyperelastic materials have been investigated in \cite{klein2021sequential,seydel2017identifying,seydel2017linearization}. 
In this setting, wave propagation is described by nonlinear hyperbolic systems, for which uniqueness results have been established in \cite{woestehoff2015uniqueness}. 

A major challenge shared by many of these approaches is the high computational cost arising from the repeated solution of large-scale forward problems during the iterative reconstruction process. In particular, accurate simulations of wave propagation phenomena require fine finite element discretizations and, consequently, large computational resources. Consequently, practical implementations often rely either on coarse discretizations, which may deteriorate reconstruction quality, or on early stopping of the iterative schemes, leading to insufficient parameter reconstructions. The present work addresses this challenge by combining iterative parameter identification with adaptive reduced-order modeling techniques to significantly reduce computational complexity while maintaining reconstruction accuracy.

\textbf{Trust-region reduced-basis parameter identification.}
In the present work, we focus on iteratively regularized Gauss--Newton-type methods (IRGNM) for inverse parameter identification problems. Such methods have been extensively studied from both mathematical and numerical perspectives, see, e.g., \cite{Hanke_1997,MR2337589,KaltenbacherMain,KKV14,KP18,kaltenbacher2021tangential,kaltenbacher2021time,kaltenbacher2021parameter}. To alleviate the computational challenges discussed above, we incorporate \acrfull{MOR} techniques into the iterative reconstruction procedure. Reduced-order modeling approaches for accelerating inverse problems have previously been investigated in \cite{lieberman2010parameter,HimpeOhlberger2015,gubisch2017proper,GhattasWillcox2021,yao2025derivative}. The main idea of MOR is to replace the high-dimensional full-order model by a low-dimensional surrogate that accurately approximates the forward and, if required, the associated adjoint problems. General introductions to \acrshort{MOR} can be found in~\cite{MR3672144,quarteroni2015reduced}. 

The present work extends the adaptive trust-region reduced-basis framework introduced in \cite{kartmann_adaptive_2023,kartmann2025adaptivereducedbasistrust} to inverse parameter identification problems constrained by Cauchy’s equation of motion from continuum mechanics, cf.~\cite{holzapfel2002nonlinear}. A central component of the approach is the iterative construction of low-dimensional reduced spaces using snapshot-based techniques such as \gls{POD}~\cite{Haasdonk2008,Haasdonk13,himpe2018hierarchical}. In contrast to classical reduced-basis methods relying on globally constructed approximation spaces, the reduced models are generated adaptively within the trust-region iteration. More precisely, solution snapshots corresponding to the current parameter iterates are used to incrementally enrich the reduced spaces, yielding surrogate models that accurately capture the local behavior of the full-order system. The trust-region framework controls the optimization by restricting parameter updates to regions in which the reduced-order model provides sufficiently accurate approximations. In this way, the proposed approach significantly reduces computational cost while maintaining reconstruction accuracy for large-scale inverse problems in elasticity.

\textbf{Main result.} This work introduces a novel simulation framework for defect identification in elastic materials by applying error aware trust-region methods combined with reduced-basis surrogates to inverse problems governed by hyperbolic PDEs.
The proposed approach combines parameter and state reduction for the reconstruction of spatially distributed parameter fields, demonstrated by the identification of the stored energy function of an elastic material.
 A central contribution of this work is the extension of the \acrlong{TR}-\acrlong{IRGNM} framework to inverse problems constrained by hyperbolic PDEs. In this context, we derive an a posteriori error estimator for the reduced-order model. Although the estimator is found to be overly conservative for direct use within the optimization loop, it provides a rigorous error control mechanism for the reduced formulation. Moreover, we consider observation operators representing realistic sensor measurements rather than assuming full-state observations. Numerical experiments demonstrate that the proposed approach significantly reduces computational cost while maintaining reconstruction accuracy compared to standard finite-element-based optimization methods. The numerical experiments are enabled by a substantial extension of the existing \texttt{pyMOR}-based software framework \cite{doi:10.1137/15M1026614}, including support for full-order models implemented in \texttt{deal.II} \cite{2024:africa.arndt.ea:deal}.

\textbf{Organization of the article.}
In Section~\ref{ssec:problem_formulation}, the problem of identifying material parameters in (hyper)elastic materials is formulated. In particular, we specify the assumptions on the stored energy function that guarantee well-posedness of the resulting inverse problem. In Sections~\ref{subsec:IRGNM} and \ref{ssec:realization_IRGNM}, the IRGNM formulation for this inverse problem is developed, and the corresponding linearized primal and adjoint problems required for the numerical solution are stated. Section~\ref{ssec:discretization} presents the discretization of the governing equations as a first-order system using a generalized implicit time-stepping scheme. Based on this discretization, reduced-basis surrogates are derived in Section~\ref{ssec:modelreduction}, where the full-order state and parameter spaces are replaced by their reduced counterparts. In particular, the strategy for constructing these spaces is introduced, and an a posteriori error estimator is derived. Section~\ref{sec:TR_IRGNM} briefly recalls the TR-IRGNM method introduced in \cite{kartmann2025adaptivereducedbasistrust} and outlines the integration of the reduced-basis \acrlong{ROM}s into it. Finally, numerical examples are presented in Section~\ref{sec:num_exper}, followed by concluding remarks and a discussion of the numerical results.

\section{Inverse Problem for Hyperelastic Wave Equations}
\label{sec:IP_for_wave}

\subsection{Parametrized Forward Problem}
\label{ssec:problem_formulation}

In this work, we consider inverse problems for Cauchy's equation in the theory of elastic materials.
\begin{equation*}
    \rho(\bx)\, \partial_t^2 u(t,\bx)
    - \nabla \cdot \nabla_Y C_q\big(\bx, Ju(t,\bx)\big)
    = l(t,\bx),
\end{equation*}
posed in a bounded open Lipschitz domain $\Omega \subset \mathbb{R}^3$ over a time interval $[0,T]$. Here, $\rho \in L^\infty(\Omega, \mathbb{R}_{> 0})$ denotes the mass density, and $l : [0,T] \times \Omega \to \mathbb{R}^3$ is the external body force. The function $C = C(\bx, Y)$, with $Y \in \mathbb{R}^{3 \times 3}$ and $\det Y > 0$, is called the stored energy function of the material. The derivative $\nabla_Y$ is taken with respect to the components of $Y$, and $Ju(t,\bx)$ denotes the Jacobian matrix of $u(t,\bx)$. For given initial data $u_0 \in H^2(\Omega; \mathbb{R}^3)$ and $u_1 \in H^1(\Omega; \mathbb{R}^3)$, we consider mixed homogeneous boundary conditions on $\partial \Omega$. Let $\Gamma \subset \partial \Omega$ be a nonempty, relatively open subset, with positive surface measure. On $\Gamma$, the body is fixed, while it can move freely on $\partial \Omega \setminus \overline{\Gamma}$. Thus, the displacement field $u : [0,T] \times \Omega \to \mathbb{R}^3$ satisfies the following initial--boundary value problem (IBVP):
\begin{align}
    \rho(\bx)\, \partial_t^2 u(t,\bx)
    - \nabla \cdot \nabla_Y C_q\big(\bx, Ju(t,\bx)\big)
    &= l(t,\bx)
    && \text{in } (0,T) \times \Omega,
    \label{eq:ibvp_pde} \\
    u(t,\bx) &= 0
    && \text{on } (0,T) \times \Gamma,
    \label{eq:ibvp_dirichlet} \\
    \nabla_Y C_q\big(\bx, Ju(t,\bx)\big) \cdot n &= 0
    && \text{on } (0,T) \times (\partial\Omega \setminus \overline{\Gamma}),
    \label{eq:ibvp_neumann} \\
    u(0,\bx) = u_0, \quad \partial_t u(0,\bx) &= u_1
    && \text{in } \Omega.
    \label{eq:ibvp_initial}
\end{align}

To obtain a parametrized formulation of the inverse problem, we adopt a structured representation of the stored energy function inspired by \cite{seydel2017linearization, seydel2017identifying, woestehoff2015uniqueness, binder2015defect}. Specifically, we assume that $C_q(x,Y)$ admits an affine decomposition with respect to a finite-dimensional parameter vector $q$, and satisfies suitable regularity, symmetry, and a Gårding-type coercivity conditions. This setting enables a reduced representation of the material law and provides the foundation for the proposed reconstruction method.

\begin{assumption}[Adapted from \cite{seydel2017linearization, seydel2017identifying}]
\label{ass:ass_to_stored_energy}
Let the stored energy function be given by
\begin{equation}
\label{eq:linear_decomp_stored_energy}
    C_q(\bm x, Y) = C_{0}(\bm x, Y) + \sum_{p = 1}^{N_Q} q_p \, C_{p}(\bm x, Y),
\end{equation}
where the parameter vector $q \coloneqq (q_1, \dots, q_{N_\Q})$ is identified as an element in the discrete parameter space
\begin{equation*}
\Q \coloneqq \mathbb{R}^{N_Q}.
\end{equation*}
Assume that for all $q \in \Qad$, we have $C_q \in \mathcal{C}^4\big(\Omega \times \mathbb{R}^{3\times 3}\big)$ and, the map 
$Y \mapsto \nabla_Y C_q(\bm x, Y)$ is \emph{linear} for all $\bm x \in \Omega$. Moreover, we impose the \emph{major symmetry} condition
\begin{equation}
\label{eq:major_symmetry}
    \partial_{Y_{ij}} \partial_{Y_{kl}} C_{p}(\bm x, Y) 
    = \partial_{Y_{kl}} \partial_{Y_{ij}} C_{p}(\bm x, Y),
\end{equation}
to hold for all $p = 0, \dots, N_Q$, $i,j,k,l = 1,2,3$, and $(x,Y) \in \Omega \times \mathbb{R}^{3\times 3}$. Additionally, we demand that for each $p = 1, \dots, N_Q$, there exist constants $\kappa_p, \mu_p > 0$ such that
\begin{equation}
\label{eq:coercivity}
    \kappa_p \, \|Y\|_F^2 \leq \nabla_YC_p(\bm x,Y) : Y \leq \mu_p \, \|Y\|_F^2
\end{equation}
for all $x\in \Omega$ and $Y \in \mathbb{R}^{3 \times 3}$ symmetric, i.e., $Y^\top = Y$, where $\|\cdot\|_F$ denotes the Frobenius norm induced by the inner product
\begin{equation*}
    \frobprod{A}{B} \coloneqq \trace{A^\top B} \text{ for } A,B \in \mathbb{R}^{3 \times 3}.
\end{equation*}
We denote $\kappa \coloneqq (\kappa_0, \dots, \kappa_{N_Q})$, $\mu \coloneqq (\mu_0, \dots, \mu_{N_Q})$ and define the set of \emph{admissible parameters} $\Qad$ such that
\begin{equation}
\label{eq:ad_param_set}
    \Qad \subset \left\{
        q \in \Q \;\middle|\;
        q_p > 0 \ \forall p, \;
        q^\top \kappa \geq \tilde{\kappa},\;
        q^\top \mu \leq \tilde{\mu}
    \right\},
\end{equation}
for some fixed constants $\tilde{\kappa} > 0$ and $\tilde{\mu} > 0$. We furthermore assume that the mass density is homogeneous, i.e. $\rho(x) = \rho > 0$ for all $x \in \Omega$.
\end{assumption}

We consider the weak formulation of \IBVP. To this end, we introduce the vector-valued spaces
\begin{equation*}
    V \coloneqq H^1_\Gamma(\Omega; \mathbb{R}^3)
    \coloneqq
    \{
        v \in H^1(\Omega; \mathbb{R}^3) \mid v|_\Gamma = 0
    \}
    \quad \text{and} \quad
    H \coloneqq L^2(\Omega; \mathbb{R}^3),
\end{equation*}
which form the Gelfand triple $V \hookrightarrow H \hookrightarrow V'$. 
We define the operator $A(q):V \to V'$ by
\begin{equation}
\label{eq:def_bilinear_form}
    \langle A(q)u, v \rangle_{V',V}
    \coloneqq \int_\Omega \nabla_Y C_q\big(\bx, Ju(\bx)\big) : Jv(\bx) \, d\bx,
    \quad \forall u,v \in V,
\end{equation}
we obtain the weak formulation of \IBVP. 
For $l \in H^1(0,T; H)$, $u_0 \in H^2(\Omega; \mathbb{R}^3)$ satisfying the compatibility conditions
\begin{equation*}
    u_0 = 0 \ \text{on } \Gamma, 
    \quad 
    \nabla_Y C_q\big(\bx, Ju_0(\bx)\big) \cdot n = 0 \ \text{on } \partial\Omega \setminus \overline{\Gamma},
\end{equation*}
$u_1 \in V$, and $q \in \Qad$, find $u$ such that
\begin{equation}
\label{eq:weak_IBVP}
\begin{aligned}
    \langle \rho \partial_t^2 u(t) + A(q)u(t), v \rangle_{V',V} &= \langle l(t), v \rangle_{V',V},\\
    u(0) = u_0, \
    \partial_t u(0) &= u_1,
\end{aligned}
\end{equation}
for all $v \in V$ and for almost all $t \in (0,T)$. 
We seek a solution $u$ such that
\begin{equation*}
    u \in W(0,T) \coloneqq H^1(0,T; V) \cap H^2(0,T; H).
\end{equation*}
Under Assumption~\ref{ass:ass_to_stored_energy}, problem \eqref{eq:weak_IBVP} admits a unique solution in $W(0,T)$ for sufficiently smooth $\partial \Omega$, $u_0$, $u_1$, and $l$; cf. \cite[Lemma 2.3 \& Corollary 2.5]{binder2015defect} and \cite[Theorem 30.4]{wloka1987partial}. Note that, for all $q \in \Qad$, the operator $A(q) \in \mathcal{L}(V, V')$ induces a continuous, symmetric bilinear form satisfying
\begin{equation*}
\label{eq:garding}
    \langle A(q) v, v \rangle_{V',V} + \alpha_q \|v\|^2_H \geq \alpha_q \|v\|^2_V,
\end{equation*}
with $\alpha_q = q^\top \kappa$. We therefore define the solution operator
\begin{equation*}
    \Sol : \Qad \to W(0,T), 
    \quad u(q) \coloneqq \Sol(q),
\end{equation*}
and refer to $u(q)$ as the \emph{primal state}. We assume that $\Sol$ is continuously Fréchet differentiable. 
This holds under additional regularity assumptions on $\partial\Omega$, $u_0$, $u_1$, and $l$, as well as further structural conditions on $C_q$ beyond Assumption~\ref{ass:ass_to_stored_energy}; cf.~\cite{seydel2017linearization}.

In general, the state $u(q)$ is not directly observable through physical measurements. We therefore introduce an observation space
\begin{equation*}
    \Csp \coloneqq L^2(0,T; C), 
\end{equation*}
where $C$ is a real Hilbert space. We consider an injective, linear, and bounded observation operator $\mathcal{C} \in \mathcal{L}(L^2(0,T;V), \Csp)$. For simplicity, we assume that $\mathcal{C}$ is time-independent. Thus, it can equivalently be interpreted as an operator $\mathcal{C} \in \mathcal{L}(V,C)$ acting pointwise in time via 
$(\mathcal{C}u)(t) = (\mathcal{C}u)(t)$ for all $u \in L^2(0,T;V)$ and $t \in (0,T)$.
A suitable observation operator is, for instance, the canonical embedding
\begin{equation}
  \label{eq:full_data_obs_op}
  \opall : W(0,T) \hookrightarrow L^2\!\left(0,T;V\right),
  \qquad
  \opall v = v,
\end{equation}
which corresponds to full knowledge of the displacement field. Note that 
$\opall$ is a bounded linear operator. This follows, for example, from the Poincaré inequality. In practical applications, however, measurements are usually only available on the surface 
of the material. Following \cite{binder2015defect,seydel2017identifying}, we model the 
measurement process by sensors that average the displacement field over small portions of 
the boundary $\partial\Omega$. To this end, we define the operator
\begin{equation*}
  \mathcal{C}_{\mathrm{sens}} : W(0,T) \rightarrow L^2\!\left(0,T;\mathbb{R}^{N_c}\right)
\end{equation*}
by
\begin{equation}
\label{eq:sensor_data_obs_op}
\begin{aligned}
\bigl(\mathcal{C}_{\mathrm{sens}} v\bigr)(t)
&= \left( \left\langle g_c,\, T[v(t)] \right\rangle_{L^2(\partial\Omega;\mathbb{R}^3)} \right)_{c=1,\dots,N_c} \\
&= \left( \int_{\partial\Omega} \langle g_c(\xi),\, v(t,\xi) \rangle_{\mathbb{R}^3}\, d\xi \right)_{c=1,\dots,N_c}.
\end{aligned}
\end{equation}
Here, $T : H^1(\Omega;\mathbb{R}^3) \rightarrow H^{1/2}(\partial\Omega;\mathbb{R}^3)$ 
denotes the trace operator, $N_c$ is the number of sensors, and 
$g_c \in L^2(\partial\Omega;\mathbb{R}^3)$, $c=1,\dots,N_c$, are weighting functions that localize the measurements to small regions of $\partial\Omega$.

The inverse problem is therefore stated as: Given $(l, u_0, u_1)$ and some noise-contaminated data $y^\delta \in \Csp$, a parameter vector $q \in \Qad$ has to be identified such that it best fits the data $y^\delta$, i.e., solving the minimization problem 
\begin{equation} 
\label{eq:minJ} 
\tag{$\mathbf{IP}$} 
\min J(q)\coloneqq \frac{1}{2} \,{\|\F(q)-\yd\|}_\Csp^2 \quad \text{subject to (s.t.) }\quad q\in\Qad, 
\end{equation}
where $\F:\Qad\to\Csp\quad\text{with }\F=\mathcal C\circ\mathcal S$ will be called the forward operator. Throughout the work, we assume that the measurement deviates from the exact data  $y^\mathsf e\in \text{range}(\F)$ at most by a known noise level $\delta>0$
\begin{align*} 
{\|y^\mathsf e-\yd\|}_\Csp\leq \delta, 
\end{align*}
and that a unique $q^\mathsf e \in \Qad$ exists, fitting exact data $y^\mathsf e$, i.e.
\begin{align} 
\label{eq: IP} 
\F(q^\mathsf e)=y^\mathsf e\quad\text{in }\Csp.
\end{align}

\subsection{Iteratively-regularized Gauss-Newton method (IRGNM)}
\label{subsec:IRGNM}
We aim to solve \eqref{eq:minJ} using the \emph{iteratively regularized Gauss-Newton method} (IRGNM), cf.~\cite{KKV14, kaltenbacher2009iterative}. During the IRGNM the current parameter $q^\iteridx \in \Qad$ is iteratively updated, by some update directions $d^\iteridx \in \Q$, such that 
\begin{equation*}
 q^{(i+1)} \coloneqq q^{\iteridx} + d^\iteridx \in \Qad,
\end{equation*}
starting from $q^{(0)} \in \Qad$, sufficiently close to the exact solution $\qex \in \Qad$. The update directions $d^\iteridx \in \Q$ are computed as solutions to  
\begin{equation}
	\label{eq:IRGNMscheme_minimize}
	\tag{$\widetilde{\textbf{IP}}_\alpha$}
	d^\iteridx \coloneqq \argmin\tilde{J}(d; q^\iteridx, \alpha^\iteridx)\quad \text{s.t.} \quad q^\iteridx + d \in\Qad,
\end{equation}
where $\tilde{J}$ denotes the linearization of the objective, regularized by a Tikhonov term
\begin{equation*}
    \tilde{J}(d; q^\iteridx, \alpha^\iteridx) \coloneqq  \frac{1}{2}\,{\|\F(q^\iteridx)+\F'(q^\iteridx)d-\yd\|}_\Csp^2 + \frac{\alpha^\iteridx}{2}\,{\|q^\iteridx + d -q_\circ \|}_\Q^2.
\end{equation*}
with regularization center $q_\circ\in\Q$. The regularization parameter $\alpha^\iteridx >0$ is chosen a posteriori at each iteration. More precisely, $\alpha^\iteridx$ is adjusted iteratively by repeatedly solving \eqref{eq:IRGNMscheme_minimize} until
\begin{equation}
	\label{eq: choice of alpha}
	\theta J(q^\iteridx)\leq 2\tilde{J}(d^\iteridx; q^\iteridx, 0)  =  {\|\F'(q^\iteridx)d^\iteridx+\F(q^\iteridx)-\yd\|}_\Csp^2 \leq \Theta J(q^\iteridx),
\end{equation}
is satisfied for $0<\theta<\Theta<2$. As stopping criteria, we use the so-called \emph{discrepancy principle}: the iteration is stopped after $i_*(\delta,\yd)$ steps provided
\begin{equation}\label{eq:discrepancy_principle}
	J(q^{\termidx})\leq \frac{1}{2}(\tau \delta)^2 \leq  J(q^\iteridx), \quad\ i=0,...,\termidx -1.
\end{equation}
The parameter $\tau>1$ reflects that we cannot expect the discrepancy to be lower than the noise in the given data.

\subsection{Realization of the IRGNM}
\label{ssec:realization_IRGNM}
For the implementation of a numerical procedure, efficient access to the derivatives of $J$ and $\tilde{J}$ is required. We will compute them by a standard adjoint argument. Let $A(q)'\in \mathcal{L}(V,V')$ denote the adjoint operator of $A(q)$, i.e. 
\begin{equation}
\label{eq:adjoint_A}
    \langle A(q)'p,u\rangle_{V',V}= \langle A(q)u,p\rangle_{V',V}
\end{equation}
for all $u,p\in V$ and $q\in \Q$. The adjoint equation associated with the objective in \eqref{eq:minJ} is given for $q\in \Qad$ as
\begin{align}
    \label{eq:adjoint_eq}
    \begin{aligned}
        \langle \rho\partial^2_t p(t)+A(q)'p(t), v \rangle_{V', V} & = - \langle \C u(t)-y^\delta(t), \C v \rangle_C \\
        p(T) &= 0, \ 
        \partial_t p(T) = 0 && \text{in } H,
    \end{aligned}
\end{align}
for all $v \in V$ and f.a.a. (for almost all) $t\in (0,T)$. Under Assumption \ref{ass:ass_to_stored_energy}, for every $u \in W(0,T)$ there exists a unique adjoint state
\begin{equation*}
    p \in H^1(0,T;H)\cap L^2(0,T;V),
    \qquad
    \partial_t^2 p \in L^2(0,T;V'),
\end{equation*}
satisfying \eqref{eq:adjoint_eq} f.a.a. $t \in (0,T)$. By Assumption~\ref{ass:ass_to_stored_energy}, the mapping ${q \mapsto A(q)}$ is affine and therefore Fréchet differentiable in $q$. It can thus be written as
\begin{equation}
\label{eq:A_affine}
A(q) = A_0 + \sum_{p=1}^{N_Q} q_p A_p,
\end{equation}
where $A_0, A_p \in \mathcal{L}(V,V')$. We introduce the shorthand notation
\begin{equation*}
\partial_q A(d) := \partial_q A(q)[d] = \sum_{p=1}^{N_Q} d_p A_p
\text{ and }
\partial_u A(q)\tilde{u} \coloneqq \partial_u (A(q)u)[\tilde{u}] = A(q)\tilde{u}.
\end{equation*}
The adjoint operators $(\partial_q A(d))'$ and $(\partial_u A(q))' \in \mathcal{L}(V,V')$ are defined analogously to~\eqref{eq:adjoint_A}. Thus, if $u$ and $p$ denote the primal and adjoint state corresponding to $q$, the gradient at $q$ in the direction $e \in \Q$ is given as
\begin{equation}
    \langle \nabla J(q), e \rangle_\Q =  
    \int_{0}^T\langle (\partial_qA(e))'u(t), p(t) \rangle_{V',V}\,\text{d}t.
\end{equation}
Let ${u^\iteridx = \mathcal{S}(q^\iteridx) \in W(0,T)}$ be the state corresponding to the current iterate ${q^\iteridx \in \Qad}$. The iterates are updated according to
\begin{equation}
    q^{(i+1)} = q^\iteridx + d^\iteridx \in \Qad,
\end{equation}
where $d^\iteridx$ solves the strictly convex optimization problem \eqref{eq:IRGNMscheme_minimize} for some regularization parameter $\alpha^\iteridx > 0$. The update $d^\iteridx$ is optimal if and only if
\begin{equation*}
    d^\iteridx = P_{\Qad} \left[q^\iteridx + d^\iteridx - \nabla_d \tilde{J}(d^\iteridx; q^\iteridx, \alpha^\iteridx) \right]- q^\iteridx,
\end{equation*}
where $P_{\Qad}:\Q \rightarrow\Qad$ denotes the projection onto the non-empty, closed convex set $\Qad \subset \Q$, defined by
\begin{equation}
    \label{eq:projection_operator}
    P_{\Qad}(x)\coloneqq\argmin \|x - y\|_\Q \text{ s.t. } y \in \Qad,
\end{equation}
cf.~\cite{nocedal2006numerical}. For solving the linearized problem~\eqref{eq:IRGNMscheme_minimize}, efficient evaluation to the gradient of the linearized objective is additionally required. To this end, we introduce the linearized system for the state derivative $\ulin \coloneqq \partial_q u(q)[d] \in L^2(0,T;V)$ in direction $d \in \Q$ and the associated adjoint state $\plin \in L^2(0,T;V)$:
\begin{equation}
\label{eq:linearized_system}
\begin{aligned}
\langle \rho \partial_t^2 \ulin(t) + \partial_u A(q)\ulin(t), v \rangle_{V',V}
+ \langle \partial_q A(d)u(t), v \rangle_{V',V}
&= 0, \\
\langle \rho \partial_t^2 \plin(t) + (\partial_u A(q))'\plin(t), v \rangle_{V',V}
+ \langle \C\big(u(t) + \ulin(t) - y^\delta(t)\big), \C v \rangle_{C}
&= 0,
\end{aligned}
\end{equation}
for all $v \in V$ and f.a.a. $t \in (0,T)$, where $u = \mathcal{S}(q)$. The initial and terminal conditions are given by
\begin{equation*}
\ulin(0) = \partial_t \ulin(0) = 0, 
\qquad
\plin(T) = \partial_t \plin(T) = 0 
\quad \text{in } H.
\end{equation*}
Under Assumption~\ref{ass:ass_to_stored_energy}, this system admits a unique solution satisfying
\begin{equation*}
\ulin \in W(0,T), 
\qquad
\plin \in H^1(0,T;H)\cap L^2(0,T;V),
\qquad
\partial_t^2 \plin \in L^2(0,T;V').
\end{equation*}
The gradient of the linearized objective $\tilde{J}(d; q, \alpha)$ in direction $e \in \Q$ is given by
\begin{equation}
\label{eq:lin_grad}
\langle \nabla_d \tilde{J}(d; q, \alpha), e \rangle_{\Q}
=
\int_0^T 
\langle (\partial_q A(e))'u(t), \plin(t) \rangle_{V',V}
\,\mathrm{d}t
+ \alpha \langle q + d - q_\circ, e \rangle_{\Q}.
\end{equation}

\section{Numerical models}
\label{sec:num_models}

Our goal is not to apply the \gls{IRGNM} directly with a standard \gls{FOM}, such as the one obtained by the finite element method, since solving such models is typically computationally too expensive. This is particularly critical in the multi-query setting of \gls{IRGNM}, where the (linearized) state and adjoint equations must be solved repeatedly for many parameter update directions. Instead, we employ the \emph{\acrlong{TR} \gls{IRGNM}} (\acrshort{TR}-\gls{IRGNM}), introduced in \cite{kartmann2025adaptivereducedbasistrust}. Its main advantage over the classical \gls{FOM}-\gls{IRGNM} is the use of iteratively constructed \gls{RB} models, which replace the \gls{FOM} in most computations, while the latter is only used to enrich the reduced models.

Under Assumption~\ref{ass:ass_to_stored_energy}, the methodology and definitions remain largely analogous to those in \cite{kartmann2025adaptivereducedbasistrust}, since the equations are linear both in the state and in the parameter. The main difference is that we consider hyperbolic equations, which requires adapting both the numerical solution procedure for discretized PDEs and the corresponding \emph{a posteriori} error estimators.

\subsection{Finite Element model}
\label{ssec:discretization}
The function spaces introduced in Section~\ref{ssec:realization_IRGNM} are approximated by the finite-dimensional subspaces
\begin{equation*}
    V_h \coloneqq \mathrm{span}\{\varphi_1, \dots, \varphi_{N_V}\} \subset V,
    \quad
    C_h \coloneqq \mathrm{span}\{c_1, \dots, c_{N_C}\} \subset C,
\end{equation*}
with respective dimensions $N_V, N_C \in \mathbb{N}$. We identify $H_h \coloneqq V_h \subset H$. Similarly, we set 
\begin{equation*}
Q_h \coloneqq \Q,\, N_Q \coloneqq N_\Q \text{ and } \qad \coloneqq \Qad
\end{equation*}
and denote the basis of $Q_h$ by $\{q_1, \dots, q_{N_Q}\}$. The corresponding discrete operators are defined by
\begin{align*}
    \bm{M}_Q &\coloneqq \left(\langle q_i,q_j \rangle_{Q}\right)_{i,j \in \{1, \dots, N_Q\}},&  \bm{M}_V &\coloneqq \left(\langle \varphi_i,\varphi_j \rangle_{V}\right)_{i,j \in \{1, \dots, N_V\}},\\
    \bm{M}_H &\coloneqq \left(\langle \varphi_i,\varphi_j \rangle_{H}\right)_{i,j \in \{1, \dots, N_V\}},& \bm{M}_C &\coloneqq \left(\langle c_i,c_j \rangle_{C}\right)_{i,j \in \{1, \dots, N_C\}},\\
    \bm{L}_h(t)&\coloneqq\left(\langle l(t),\varphi_j\rangle_{V',V}\right)_{j\in \{1,\dots,N_V\}}&\text{for}&~t\in(0,T),\\
    \bm{A}_h(q_h)&\coloneqq \left(\langle A(q_h)\varphi_j,\varphi_i \rangle_{V',V}\right)_{i,j \in \{1, \dots, N_V\}}&\text{for}&~q_h\in Q_h. 
\end{align*}
We also introduce an equidistant temporal grid with uniform time steps
\begin{equation*}
    \{0 \eqqcolon t^0 < t^1 < \dots < t^K \coloneqq T\} \subset [0,T], \quad \Delta t \coloneqq t^{k} - t^{k-1}
\end{equation*}
for all $k \in \mathbb{K} \coloneqq \{1, \dots, K\}$. We write $v = [v^1, \dots, v^K] \in V_h^K$ for a discrete trajectory with values in $V_h$, where $V_h^K$ denotes the product space of $V_h$. In the following, we identify $v$ with its DoF-matrix $\bm{v}=[\bm{v}^1,\dots,\bm{v}^K]\in \R^{N_V\times K}$ where $v^k = \sum_{i = 1}^{N_V} \bm{v}^k_{i}\varphi_i$. We equip these spaces with the discrete inner products
\begin{equation}
    \label{eq:disc_bochner_prods}
    \langle v, w \rangle_{V_h}\coloneqq \bm{v}^\top \bm{M}_V \bm{w} \text{ and } \langle v, w \rangle_{V_h^K}\coloneqq \Delta t \sum_{k = 1}^K \langle v^k, w^k \rangle_{V_h}
\end{equation}
respectively. Analogous definitions apply to the spaces $H_h$, $Q_h$ and $C_h$. 

We employ a generalized trapezoidal rule for the temporal discretization. For a fixed parameter $\zeta \in [0,1]$, the coefficient matrices $\bm{u}_h, \dot{\bm{u}}_h \in \mathbb{R}^{N_V \times K}$ associated with the discrete states $u_h, \dot{u}_h \in V_h^K$ for $q_h \in Q_h$ are obtained as the unique solution of the state system

\begin{equation*}
    \begin{aligned}
        \frac{1}{\Delta t} \rho\bm{M}_H (\dot{\bm{u}}_h^k - \dot{\bm{u}}_h^{k-1}) + \bm{A}_h(q_h)(\zeta \bm{u}_h^k + (1-\zeta) \bm{u}_h^{k-1}) &= \zeta\bm{L}_h^k + (1-\zeta)\bm{L}_h^{k-1}, \\
        \frac{1}{\Delta t} \rho\bm{M}_H (\bm{u}_h^k - \bm{u}_h^{k-1}) & = \zeta \rho\bm{M}_H \dot{\bm{u}}_h^k + (1 - \zeta) \rho\bm{M}_H \dot{\bm{u}}_h^{k-1},
    \end{aligned}
\end{equation*}
for all $k \in \mathbb{K}$, subject to the initial conditions
\begin{equation*}
\bm{u}_h^0 \coloneqq \left((u_0,\varphi_j)_{V}\right)_{j\in \{1,\dots,N_V\}} \text{ and }
\dot{\bm{u}}_h^0 \coloneqq \left((u_1,\varphi_j)_{V}\right)_{j\in \{1,\dots,N_V\}},
\end{equation*}
and $\bm{L}_h^k\coloneqq \bm{L}_h(t^k)$. Reformulation yields
\begin{align}
\label{eq:FOM_primal_PDE_system}
    \begin{aligned}
        \rho\bm{M}_H \dot{\bm{u}}_h^k &= \rho\bm{M}_H \dot{\bm{u}}_h^{k-1} - \Delta t \bm{A}_h(q_h)\bm{u}^{k - 1 + \zeta}_h + \Delta t  \bm{R}^{k-1+\zeta}_{1,h}, \\
        \bm{S}_h(q_h, \zeta) \bm{u}_h^k &= \bm{S}_h(q_h,\zeta-1) \bm{u}_h^{k-1} + \rho\Delta t \bm{M}_H \dot{\bm{u}}_h^{k-1} + \zeta \Delta t  \bm{R}^{k-1+\zeta}_{1,h},
    \end{aligned}
\end{align}
for all $k \in \mathbb{K}$ with
\begin{align*}
    \bm{u}^{k - 1 + \zeta}_h & \coloneqq \zeta \bm{u}_h^k + (1-\zeta) \bm{u}_h^{k-1}, \\
    \bm{S}_h(q_h, \chi) & \coloneqq \rho\bm{M}_H + (\Delta t)^2 \zeta \chi \bm{A}_h(q_h), \\
    \bm{R}^{k- 1 + \zeta}_{1,h} & \coloneqq \zeta\bm{L}_h^k + (1-\zeta)\bm{L}_h^{k-1}.
\end{align*}

\begin{remark}
For implementation purposes, we solve the reformulated system for the intermediate state $\bm{u}_h^{k-1+\zeta}$ rather than $\bm{u}_h^k$. The state $\bm{u}_h^k$ is then recovered explicitly via
\begin{equation*}
\bm{u}_h^k = \frac{1}{\zeta}\bigl(\bm{u}_h^{k-1+\zeta} - (1-\zeta)\bm{u}_h^{k-1}\bigr).    
\end{equation*}
\end{remark}
The discretized solution operator $\Sol_h : \qad \rightarrow V_h^K$ is defined by assigning to each $q_h \in Q_h$ the solution to \eqref{eq:FOM_primal_PDE_system}. We define the discrete observation operator $\C_h: V_h \rightarrow C_h$ by assigning to $v_h \in V_h$ the vector $w_h = \C_h v_h$ defined as
\begin{align*}
    & \bm{w}_h \coloneqq \bm M_C^{-1}
	\left(\langle \C v_h,c_j\rangle_{C}\right)_{j\in \{1,\dots,N_C\}}.
\end{align*} 
The discrete forward operator is given as $\F_h := \C_h \circ \Sol_h : \qadK \rightarrow C_h^K$, with $\F_h(q_h^k) = \C_hu^k_h$ for all $k \in \mathbb{K}$. Analogously, we define $y_h^\delta \in C_h^K$ as
\begin{align*}
    & \bm{y}^{\delta,k}_h\coloneqq \bm M_C^{-1}
	\left(\langle y^\delta(t^k),c_j\rangle_{C}\right)_{j\in \{1,\dots,N_C\}}
	\text{ for }k\in \mathbb K.
\end{align*}
We also introduce the discrete (bi)linear forms
\begin{align*}
     &\bm{C}_h \coloneqq \bm \left(\langle \C_h\varphi_j, \C_h\varphi_i \rangle_{C_h}\right)_{i,j \in \{1, \dots, N_V\}},\ {\bm{Cy}^{\delta,k}_h}\coloneqq \bm \left(\langle \C_h\varphi_j, y_{h}^{\delta,k}\rangle_{C_h}\right)_{1\le j \le N_V} \text{ for }k\in \mathbb K,\\
     &c_1(\bm{y}_h^{\delta}) \coloneqq {\Delta t} \sum_{k = 1}^K\tfrac{1}{2}\big({\bm{y}_h^{\delta,k}}^\top \bm M_C {\bm{y}_h^{\delta,k}} \big).
\end{align*}
Then, for $\Sol_h(q_h)=u_h$ with $q_h\in Q_h$,  the discrete objective functional is given by
\begin{align}
    \label{eq:J_h}
    \begin{aligned}
        J_h(q_h) &\coloneqq \frac{\Delta t}{2} \sum_{k = 1}^K \|\F_h(q_h)^k - y_{h}^{\delta,k} \|^2_{C_h},\\
        &\hspace{0.5mm}=c_1(\bm{y}_h^{\delta})+{\Delta t}  \sum_{k = 1}^K  \tfrac{1}{2}\big({\bm{u}_h^{k\top}}\bm C_h \bm{u}_h^k \big)-\big({\bm{u}_h^{k\top}} \bm{Cy}^{\delta,k}_h\big).
    \end{aligned}
\end{align}
Therefore, the discrete gradient $\nabla J_h(q_h) \in Q_h$ of \eqref{eq:J_h} is component-wise given by
\begin{equation*}
    \bm{\nabla J}_h(q_h) = \Delta t \sum_{k=1}^K \bm M_Q^{-1}\bm B_h(u_h^k)^\top \bm{p}^k_h
\end{equation*}
where $\bm{p}_h \in \mathbb{R}^{N_V \times K}$ denote the DoF-Matrix of the discrete adjoint and
\begin{equation*}
\bm{B}_h(u_h^k)\coloneqq \left( \langle A_j u_h^k, \varphi_i \rangle_{V',V} \right)_{i \in \{1, \dots, N_V\}, j \in \{1, \dots, N_Q\}} \in \mathbb{R}^{N_V \times N_Q},
\end{equation*}
where $A_j$ are the operators introduced in \eqref{eq:A_affine}. The discrete adjoint system can be solved analogously to the state system. The vectors $\bm{p}_h$ and $\dot{\bm{p}}_h \in \mathbb{R}^{N_V \times K}$ are the unique solutions of
\begin{equation}
\label{eq:FOM_adjoint_PDE_system}
    \begin{aligned}
        \rho\bm{M}_H \dot{\bm{p}}_h^k &= \rho\bm{M}_H \dot{\bm{p}}_h^{k+1} - \Delta t \bm{A}^\top_h(q_h)\bm{p}^{k + 1 - \zeta}_h + \Delta t\bm{R}^{k+1-\zeta}_{2,h}(\bm{u}_h) \\
        \bm{S}^\top_h(q_h, \zeta) \bm{p}_h^k &= \bm{S}^\top_h(q_h,\zeta-1) \bm{p}_h^{k+1} + \rho\Delta t
        \bm{M}_H \dot{\bm{p}}_h^{k+1} + \zeta \Delta t \bm{R}^{k+1-\zeta}_{2,h}(\bm{u}_h).
    \end{aligned}
\end{equation}
for all $k \in \{0, \dots, K - 1\}$ with terminal conditions $\bm{p}_h^{K}= \dot{\bm{p}}_h^{K} = \bm{0}$ and 
\begin{align*}
    \bm{p}^{k + 1 -\zeta}_h & \coloneqq \zeta \bm{p}_h^k + (1-\zeta) \bm{p}_h^{k+1}, \\
    \bm{R}^{k+1-\zeta}_{2,h}(\bm{u}_h) & \coloneqq \zeta(\bm{Cy}^{\delta,k}_h - \bm C_h \bm{u}_h^k) + (1- \zeta)(\bm{Cy}^{\delta,k+1}_h - \bm C_h \bm{u}_h^{k+1}).
\end{align*}
Similarly, for given parameters $q_h \in Q_h$ and direction $d_h \in Q_h$, the coefficients of the linearized state and adjoint functions $\tilde{\bm{u}}_h, \dot{\tilde{\bm{u}}}_h, \tilde{\bm{p}}_h, \dot{\tilde{\bm{p}}}_h \in \mathbb{R}^{N_V \times K}$ are calculated as solutions to the system
\begin{equation}
\label{eq:FOM_lin_primal_PDE_system}
    \begin{aligned}
        \rho\bm{M}_H \dot{\tilde{\bm{u}}}_h^k &= \rho\bm{M}_H \dot{\tilde{\bm{u}}}_h^{k-1} - \Delta t \bm{A}_h(q_h)\tilde{\bm{u}}^{k - 1 + \zeta}_h + \Delta t \bm{R}^{k+1-\zeta}_{3,h}(\bm{u}_h, \bm{d}_h), \\
        \bm{S}_h(q_h, \zeta) \tilde{\bm{u}}_h^k &= \bm{S}_h(q_h,\zeta-1) \tilde{\bm{u}}_h^{k-1} + \rho\Delta t \bm{M}_H \dot{\tilde{\bm{u}}}_h^{k-1} + \zeta \Delta t \bm{R}^{k+1-\zeta}_{3,h}(\bm{u}_h,\bm{d}_h),
    \end{aligned}
\end{equation}
for all $k \in \mathbb{K}$ and
\begin{equation}
    \label{eq:FOM_lin_adjoint_PDE_system}
    \begin{aligned}
        \rho\bm{M}_H \dot{\tilde{\bm{p}}}_h^k &= \rho\bm{M}_H \dot{\tilde{\bm{p}}}_h^{k+1} - \Delta t \bm{A}^\top_h(q_h)\tilde{\bm{p}}^{k + 1 - \zeta}_h  + \Delta t\bm{R}^{k+1-\zeta}_{4,h}(\bm{u}_h), \\
        \bm{S}^\top_h(q_h, \zeta) \tilde{\bm{p}}_h^k &= \bm{S}^\top_h(q_h,\zeta-1) \tilde{\bm{p}}_h^{k+1} + \rho\Delta t \bm{M}_H \dot{\tilde{\bm{p}}}_h^{k+1} + \zeta \Delta t \bm{R}^{k+1-\zeta}_{4,h}(\bm{u}_h).
    \end{aligned}
\end{equation}
for all $k \in \{0, \dots, K - 1\}$, subject to the initial and terminal conditions 
\begin{equation*}
\tilde{\bm{u}}_h^{0}= \dot{\tilde{\bm{u}}}_h^{0} = \tilde{\bm{p}}_h^{K}= \dot{\tilde{\bm{p}}}_h^{K} = \bm{0},
\end{equation*}
where we define
\begin{align*}
    \tilde{\bm{u}}^{k - 1 +\zeta}_h & \coloneqq \zeta \tilde{\bm{u}}_h^k + (1-\zeta) \tilde{\bm{u}}_h^{k-1}, \\
    \tilde{\bm{p}}^{k + 1- \zeta}_h & \coloneqq \zeta \tilde{\bm{p}}_h^k + (1-\zeta) \tilde{\bm{p}}_h^{k+1}, \\
    \bm{R}^{k-1+\zeta}_{3,h}(\bm{u}_h,\bm{d}_h) & \coloneqq - \zeta \bm{B}_h(u_h^k) \bm{d}_h - (1-\zeta)\bm{B}_h(u_h^{k-1}) \bm{d}_h, \\
    \bm{R}^{k+1-\zeta}_{4,h}(\bm{u}_h) & \coloneqq \zeta(\bm{C}\tilde{\bm{y}}_h^{\delta,k} - \bm C_h \tilde{\bm{u}}_h^k) +(1-\zeta)(\bm{C}\tilde{\bm{y}}_h^{\delta,k+1} - \bm C_h \tilde{\bm{u}}_h^{k+1}).
\end{align*}
with $\bm{C}\tilde{\bm{y}}^{\delta,k}_h \coloneqq {\bm{Cy}^{\delta,k}_h}-\bm{C}_h \bm{u}_h^{k}$. The regularized linearization of $J_h(q_h)$ for $\alpha > 0$ is given by 
\begin{align*}
    \widetilde{J}_h(d_h; q_h, \alpha)  \coloneqq\, &  c_2(\tilde{\bm{y}}^{\delta}_h)+\Delta t \sum_{k=1}^K \left[
    \tfrac{1}{2} \tilde{\bm{u}}_h^{k\top} \bm{C}_h \tilde{\bm{u}}_h^k 
    - \tilde{\bm{u}}_h^{k\top} \bm{C}\tilde{\bm{y}}^{\delta,k}_h
    \right] \\
    & + \alpha c_3(\tilde{\bm{q}}_{\circ,h}) + \alpha\left[
    \tfrac{1}{2} \bm{d}_h^{\top} \bm{M}_Q \bm{d}_h 
    - \bm{d}_h^{\top} \bm{M}_Q \tilde{\bm{q}}_{\circ,h}  \right],
\end{align*}
where $\tilde{\bm{u}}_h$ is the linearized state associated to $q_h$ and $d_h$ and the constant terms are defined by ${{\tilde{\bm{q}}}_{\circ,h} \coloneqq {\bm{q}_{\circ,h}} - {\bm{q}_{h}}}$, 
\begin{align*}
    &c_2(\tilde{\bm{y}}^{\delta}_h) \coloneqq c_1(\bm{y}_h^{\delta})+ {\Delta t} \sum_{k = 1}^K\tfrac{1}{2}\big({\bm{u}_h^{k\top}} \bm C_h {\bm{u}_h^{k}} \big)-\big({\bm{u}_h^{k\top}} \bm{Cy}^{\delta,k}_h\big),\\
    &c_3({\tilde{\bm{q}}}_{\circ,h})\coloneqq \tfrac{1}{2} \tilde{\bm{q}}_{\circ,h}^{\top} \bm{M}_Q \tilde{\bm{q}}_{\circ,h}.
\end{align*}
The gradient of the discrete linearized objective function is then given by
\begin{equation*}
    \bm{\nabla_d} \widetilde{\bm{J}}_{h}(d_h; q_h, \alpha) = \Delta t \sum_{k=1}^K \bm{M}_Q^{-1} \bm{B}_h(u_h^k)^\top \tilde{\bm{p}}_h^k + \alpha (\bm{d}_h - \tilde{\bm{q}}_{\circ,h}).
\end{equation*}
The FOM-IRGNM solves the discretized optimization problem corresponding to \eqref{eq:minJ}, defined as
\begin{equation}
	\label{eq:minJ_FOM}
	\tag{$\mathbf{IP}_{h}$}
	\min J_h(q_h) \quad \text{s.t.}\quad q_h \in \qad.
\end{equation}
The procedure is completely analogous to that described in Section~\ref{subsec:IRGNM}. The updates $d^\iteridx_h \in Q_h$ are computed by iteratively solving  
\begin{align}
    \label{prob:lin_prob_FOM}
    \tag{$\widetilde{\mathbf{IP}}_{\alpha,h}$}
    d^\iteridx_h\coloneqq\argmin \widetilde{J}_h(d_h; q^\iteridx_h, \alpha^{\iteridx})\quad \text{s.t.}\quad q^\iteridx_h + d_h \in  \qad,
\end{align}
until 
\begin{equation}
	\label{eq:choice_of_alpha_FOM}
	\theta J_h(q_h^\iteridx)\leq 2  \widetilde{J}_h(d^\iteridx_h; q^\iteridx_h, 0)\leq \Theta J_h(q_h^\iteridx),
\end{equation}
is satisfied for the selected $\alpha^\iteridx$. The update is then accepted, and the next iterate is defined by ${q_h^{(i+1)} := q_h^\iteridx + d^\iteridx_h}$.

\subsection{Reduced-Basis model}
\label{ssec:modelreduction}
As discussed above, we aim to replace the FOM by a reduced-order model (ROM) obtained by projecting the system equations onto reduced-basis spaces, which is a natural choice for model order reduction in the multi-query setting of the IRGNM. Following \cite{kartmann2025adaptivereducedbasistrust, kartmann_adaptive_2023}, we employ a combined reduction of the parameter and state spaces, since the high dimensionality of the parameter space poses significant challenges to established methods, which typically only a reduce in the state space. This combined approach is particularly well suited for achieving an efficient offline–online decomposition.

We therefore introduce the reduced state and parameter spaces
\begin{equation*}
V_r=\text{span}(\tilde{\varphi}_1,\dots,\tilde{\varphi}_{n_V}) \subset V_h,\quad Q_r=\text{span}(\tilde{q}_1,\dots,\tilde{q}_{n_Q}) \subset Q_h,     
\end{equation*}
with dimensions $n_Q \ll N_Q$ and $n_V \ll N_V$ given. Analogously to the FOM, we identify $H_r \coloneqq V_r$ and denote the basis matrices as $\bm \Psi_Q \in \R^{N_Q\times n_Q}$, $\bm \Psi_V \in \R^{N_V\times n_V}$ and assume that the bases are orthonormal with respect to the corresponding inner products, i.e.,
\begin{equation*}
\bm \Psi_V^\top \bm M_V \bm \Psi_V = \bm I_{n_V}
\text{ and } 
\bm \Psi_Q^\top \bm M_Q \bm \Psi_Q= \bm I_{n_Q}.     
\end{equation*}
The corresponding reduced operators are given by
\begin{align*}
    &\bm{M}_{Q,r} \coloneqq \bm I_{n_Q}, \quad \bm{M}_{V,r} \coloneqq \bm I_{n_V},\quad \bm{M}_{H,r} \coloneqq \bm \Psi_V^\top \bm{M}_{H} \bm \Psi_V.
\end{align*}
For $q_r \in Q_r$, which we identify with its coefficient vector $\bm q_r \in \R^{n_Q}$, the reduced operator $\bm{A}_r(q_r)$ admits an affine decomposition
\begin{equation*}
    \bm{A}_r(q_r) = \bm{A}_{r,0} + \sum_{j=1}^{n_Q} \bm q_{r,j}\, \bm{A}_{r,j},
\end{equation*}
where
\begin{equation*}
    \bm{A}_{r,0} \coloneqq \bm \Psi_V^\top \bm{A}_{h,0}\bm \Psi_V
    \text{ and }
    \bm{A}_{r,j} \coloneqq \bm \Psi_V^\top \bm{A}_h(\tilde{q}_j)\bm \Psi_V - \bm{A}_{r,0}.
\end{equation*}
The matrices $\bm{A}_{r,0}$ and $\bm{A}_{r,j}$ can be precomputed during the construction of the reduced-basis model (offline phase) and subsequently reused during the parameter inference (online phase). Since the reduced parameter space $Q_r$ is typically of much lower dimension than $Q_h$, evaluating this affine decomposition is significantly cheaper than assembling the full-order matrix $\bm{A}_h(q_r)$.

Given a reduced parameter $q_r \in Q_r$, the RB approximation $\bm u_r$ of the state is obtained by solving the primal system \eqref{eq:ROM_primal_PDE_system}, projected to the reduced state space $V_r$. The corresponding system is given by
\begin{align}
\label{eq:ROM_primal_PDE_system}
    \begin{aligned}
        \rho\bm{M}_{H,r} \dot{\bm{u}}_r^k &= \rho\bm{M}_{H,r} \dot{\bm{u}}_r^{k-1} - \Delta t \bm{A}_r(q_r)\bm{u}_r^{k-\zeta} + \Delta t  \bm{R}^{k-\zeta}_{1,r}, \\
        \bm{S}_r(q_r, \zeta) \bm{u}_r^k &= \bm{S}_r(q_r,\zeta-1) \bm{u}_r^{k-1} + \Delta t \rho\bm{M}_{H,r} \dot{\bm{u}}_r^{k-1} + \zeta \Delta t  \bm{R}^{k-\zeta}_{1,r},
    \end{aligned}
\end{align}
for all $k \in \mathbb{K}$, subject to initial conditions $\bm{u}_r^{0} \coloneqq \bm \Psi_V^\top \bm{u}_h^{0}$ and 
$\dot{\bm{u}}_r^{0} \coloneqq \bm \Psi_V^\top \dot{\bm{u}}_h^{0}$, where
\begin{align*}
    \bm{u}_r^{k-\zeta} & \coloneqq \zeta \bm{u}_r^k + (1-\zeta) \bm{u}_r^{k-1} \\
    \bm{S}_r(q_r, \chi) & \coloneqq \rho\bm{M}_{H,r} + (\Delta t)^2 \zeta \chi \bm{A}_r(q_r) \\
    \bm{R}^{k-\zeta}_{1,r} & 
    \coloneqq \bm \Psi_V^\top \bm{R}^{k-\zeta}_{1,h}.
\end{align*}
The RB approximation of the adjoint $\bm p_r$ is obtained analogously by projecting the adjoint system~\eqref{eq:FOM_adjoint_PDE_system} onto $V_r$. For a direction $d_r \in Q_r$, the RB approximations of the linearized state and adjoint, denoted by $\tilde{\bm u}_r$ and $\tilde{\bm p}_r$, are computed in a similar manner. 

We write $u_r=\Sol_r(q_r)$ and define $\F_r=\C_h\circ\Sol_r : \qadr \rightarrow C_h^K$, where the set of reduced admissible parameters $\qadr \subset Q_r$ is defined as
\begin{equation}
\label{eq:def_reduced_admissiable_set}
    \qadr
    \coloneqq
    \left\{
    \bm q_r \in Q_r
    \;\middle|\;
    \bm{\Psi}_Q \bm q_r \in \qad
    \right\}.
\end{equation}
The reduced objective for $q_r\in Q_r^K$ is thus given by
\begin{align}
    \label{eq:J_r}
    J_r(q_r) \coloneqq 
    & c_1(\bm{y}_h^{\delta}) + {\Delta t}\sum_{k = 1}^K \tfrac{1}{2}\big({\bm{u}_r^{k\top}} \bm{C}_r \bm{u}_r^k \big)-\big({\bm{u}_r^{k\top}} \bm{C}\bm{y^{\delta,k}}_r \big)
\end{align}
where
\begin{align*}
    &\bm{C}_r=\bm\Psi_V^\top \bm C_h \bm\Psi_V,\quad \bm{Cy}^{\delta,k}_r=\bm\Psi_V^\top \bm C\bm{y}_h^{\delta,k} \quad \forall k \in \mathbb{K}.
\end{align*}
The associated discrete gradient is given by 
\begin{equation*}
    \bm{\nabla J}_r(q_r) = \Delta t \sum_{k=1}^K \bm B_r(u_r^k)^\top \bm{p}^k_r,\
\text{where }
    \bm{B}_r(u_r^k) 
    \coloneqq 
    \bm{\Psi}_V^\top \bm{B}_h(u_r^k)\bm{\Psi}_Q 
    \in \mathbb{R}^{n_V \times n_Q}.
\end{equation*}
Note that $\Sol_r$ is well-defined and that the online evaluation of $J_r(q_r)$ does not require any full-order computations. The reduced optimization problem can thus be stated as
\begin{equation}
\label{prob:prob_ROM}
	\tag{$\textbf{IP}_{r}$} 
	\min{J_r(q_r)}\text{ s.t. } q_r \in \qadr.
\end{equation}
We further define ${\tilde{\bm{ q}}}_{r,0}=\bm \Psi_Q^\top \bm M_Q\tilde{\bm{ q}}_{h,0}$, $\bm{C}\tilde{\bm{y}}^{\delta,k}_r={\bm{Cy}^{\delta,k}_r}-\bm{C}_r \bm{u}_r^{k}$ and the constant terms
\begin{align*}
    &c_2(\tilde{\bm{y}}^{\delta}_r) \coloneqq c_1(\bm{y}_h^{\delta})+ {\Delta t} \sum_{k = 1}^K\tfrac{1}{2}\big({\bm{u}_r^{k\top}} \bm C_r {\bm{u}_r^{k}} \big)-\big({\bm{u}_r^{k\top}} \bm{Cy}^{\delta,k}_r\big),\
    c_3({\tilde{\bm{q}}}_{\circ,r})\coloneqq \tfrac{1}{2} \tilde{\bm{q}}_{\circ,r}^{\top} \tilde{\bm{q}}_{\circ,r}.
\end{align*}
The linearized reduced objective $\widetilde{J}_r$ is defined as
\begin{align*}
    \widetilde{J}_r(d_r; q_r, \alpha)  \coloneqq &  c_2(\tilde{\bm{y}}^{\delta}_r)+\Delta t \sum_{k=1}^K \left[
    \tfrac{1}{2} \tilde{\bm{u}}_r^{k\top} \bm{C}_r\tilde{\bm{u}}_r^k 
    - \tilde{\bm{u}}_r^{k\top} \bm{C}\tilde{\bm{y}}^{\delta,k}_r
    \right] \\
    & +  \alpha c_3({\tilde{\bm{q}}}_{\circ,r})+ \alpha \left[
    \tfrac{1}{2} \bm{d}_r^{\top} \bm{d}_r
    - \bm{d}_r^{\top} \tilde{\bm{q}}_{\circ,r}  \right],
\end{align*}
where we have used $\bm M_{Q,r}=\bm I_{n_Q}$. The reduced linearized subproblem is thus given by
\begin{align}
\label{prob:lin_prob_rom}
	\tag{$\widetilde{\textbf{IP}}_{\alpha, r}$}
    d^\iteridx_r	:= \argmin \tilde{J}_r(d_r; q^\iteridx_r, \alpha^\iteridx) \text{ s.t. } q^\iteridx_r + d_r \in \qadr.
\end{align}
The discrepancy principle for $0 < \theta < \Theta < 2$ is given by
\begin{equation}
	\label{eq:choice_of_alpha_ROM}
	\theta J_r(q_r^\iteridx)\leq 2\widetilde{J}_r(d^\iteridx_r; q^\iteridx_r, 0) \leq \Theta J_r(q_r^\iteridx).
\end{equation}
The coefficients of $\widetilde{J}_r$ at $q_r \in \qadr$ are computed as
\begin{equation*}
    \bm{\nabla_d} \widetilde{\bm{J}}_r(d_r; q_r, \alpha) = \Delta t \sum_{k=1}^K \bm B_r(u_r^k)^\top \tilde{\bm{p}}^k_r + \alpha(\bm{d}_r-\tilde {\bm{q}}_{\circ, r}).
\end{equation*} 

\subsubsection{Error estimation}
\label{ssec:errorest}
In this section, we derive an a posteriori error estimator for the reduced objective $J_r$. More precisely, we seek a estimator
\begin{equation*}
\objerrorestsymb : Q_r \times \mathbb{R}_{\geq 0} \to \mathbb{R}_{\geq 0}
\end{equation*}
such that, for all $q_r \in Q_r$ holds
\begin{equation}
\label{eq:objective_error_est}
|J_h(q_r) - J_r(q_r)| \leq \objerrorest{q_r}{J_r(q_r)}.
\end{equation}
The key step is the derivation of an error estimator for the reduced state $u_r$. To derive the state estimator, we adapt standard techniques from reduced-basis theory. To this end, for $k \in \mathbb{K}$, $q \in Q_h$, and $u, \dot{u} \in V_h^K$, we introduce the \textit{primal residual operator} $r^k_{\mathrm{pr}}(u, \dot{u}; q) \in V_h'$ defined by
\begin{equation}
\label{eq:primal_residual}
\bm{r}^k_{\mathrm{pr}}(u, \dot{u}; q)
=
\bm{R}^{k-1+\zeta}_{1,h}
- \bm{A}_h(q)\bm{u}^{k-1+\zeta}
- \frac{1}{\Delta t}\rho\bm{M}_H\bigl(\dot{\bm{u}}^k - \dot{\bm{u}}^{k-1}\bigr)
\in \mathbb{R}^{N_V}.
\end{equation}
Based on this residual formulation, an a posteriori error estimate in the discrete energy norm can be derived. The proof follows closely the arguments of~\cite{glas2020reduced,bernardi2005time} and is given in detail in Appendix~\ref{ssec:proof_state_error_est}.
\begin{theorem}
\label{thm:state_apost_error_est}
Let $(u_h^k, \dot{u}_h^k)$ and $(u_r^k, \dot{u}_r^k)$, for $k \in \mathbb{K}$, denote the solutions to~\eqref{eq:FOM_primal_PDE_system} and~\eqref{eq:ROM_primal_PDE_system}, respectively, for $q_r \in \qadr$ and $\zeta \geq \tfrac{1}{2}$. Define the errors
\begin{equation*}
\errprim{k}{} \coloneqq u_h^k - u_r^k \in V_h,
\qquad
\errprimdot{k}{} \coloneqq \dot{u}_h^k - \dot{u}_r^k \in H_h.
\end{equation*}
Then the following a posteriori error estimate holds:
\begin{equation}
\label{eq:state_apost_error_est}
\left(\sum_{k = 1}^K \|\errprim{k}{}\|^2_{E} \right)^{\frac{1}{2}} \leq \stateerrorest{q_r},
\end{equation}
where the energy norm is defined by
\begin{equation*}
\|\errprim{k}{}\|^2_{E}
=
(\errprimdot{k}{\bm})^\top \bm{M}_h \errprimdot{k}{\bm}
+
(\errprim{k}{\bm})^\top \bm{A}_h(q_r) \errprim{k}{\bm}.
\end{equation*}
The right-hand side is given by
\begin{equation*}
\stateerrorest{q_r}
=
2 \left(
\sum_{k = 1}^K
\left(
\Delta t \sum_{k' = 1}^k
\|r^{k'}_{\mathrm{pr}}(u_r, \dot{u}_r; q_r)\|_{H'_h}
\right)^{2}
\right)^{\frac{1}{2}}.
\end{equation*}
\end{theorem}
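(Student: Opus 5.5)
We plan a discrete energy argument for the error system, in the spirit of \cite{glas2020reduced,bernardi2005time}. \emph{Step 1 (error equations).} Subtract the reduced scheme, lifted to $V_h$ via $\bm\Psi_V$, from the full scheme~\eqref{eq:FOM_primal_PDE_system}. Since the full solution has zero residual, the error pair $(e^k,\dot e^k)$ satisfies
\begin{equation*}
\frac{\rho}{\Delta t}\bm M_H(\dot{\bm e}^k-\dot{\bm e}^{k-1})+\bm A_h(q_r)\bm e^{k-1+\zeta}=\bm r^k_{\mathrm{pr}}(u_r,\dot u_r;q_r),
\qquad
\bm M_H(\bm e^k-\bm e^{k-1})=\Delta t\,\bm M_H\,\dot{\bm e}^{k-1+\zeta}.
\end{equation*}
For the second relation we use that the reduced kinematic equation, taken in $\bm\Psi_V$ coordinates with $\bm M_H$ inverted on the full space, holds exactly, so its residual vanishes. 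We also use that the reduced initial data are orthogonal projections of $u_h^0,\dot u_h^0$. We will either assume that $u_0,u_1$ lie in $V_r$, so that $e^0=\dot e^0=0$, or carry the initial error term as an additive constant.

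\emph{Step 2 (energy identity).} Test the first equation with $\dot{\bm e}^{k-1+\zeta}$ and use the second to replace $\dot{\bm e}^{k-1+\zeta}$ by $(\bm e^k-\bm e^{k-1})/\Delta t$ in the stiffness term. Symmetry of $\bm A_h$ and $\bm M_H$ (major symmetry~\eqref{eq:major_symmetry}) gives, via $(a-b)^\top\bm A(\zeta a+(1-\zeta)b)=\tfrac12(a^\top\bm Aa-b^\top\bm Ab)+(\zeta-\tfrac12)(a-b)^\top\bm A(a-b)$ and the analogous mass identity,
\begin{equation*}
\tfrac{1}{2}\bigl(\|e^k\|_E^2-\|e^{k-1}\|_E^2\bigr)+(\zeta-\tfrac12)\bigl(\text{nonneg. increments}\bigr)=\Delta t\,\langle r^k_{\mathrm{pr}},\dot e^{k-1+\zeta}\rangle .
\end{equation*}
For $\zeta\ge\tfrac12$ the dissipative terms can be dropped, provided $\bm A_h(q_r)$ is positive semidefinite. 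We take the coercivity in Assumption~\ref{ass:ass_to_stored_energy} together with $\Gamma\neq\emptyset$ and Korn's inequality to guarantee this, so that $\|\cdot\|_E$ is a norm. Here $\bm M_h$ is read as $\rho\bm M_H$, with $\rho$ absorbed.

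\emph{Step 3 (residual bound and summation).} Bound the right-hand side by $\Delta t\|r^k_{\mathrm{pr}}\|_{H_h'}\|\dot e^{k-1+\zeta}\|_H\le\Delta t\|r^k\|_{H'_h}\max(\|e^k\|_E,\|e^{k-1}\|_E)$, using convexity of the $H$-norm. Sum from $1$ to $k$, set $E_k=\max_{j\le k}\|e^j\|_E$, and apply the standard discrete Gronwall-free argument: $\tfrac12E_k^2\le\Delta t\sum_{k'\le k}\|r^{k'}\|E_k$, hence
\begin{equation*}
\|e^k\|_E\le E_k\le 2\Delta t\sum_{k'=1}^k\|r^{k'}_{\mathrm{pr}}\|_{H_h'} .
\end{equation*}
Squaring and summing over $k$ yields $\stateerrorest{q_r}$, including its factor~$2$.

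\emph{Main obstacle.} We expect the delicate point to be Step~2: obtaining exact telescoping of the $\zeta$-weighted averages with only the nonnegative $(\zeta-\frac12)$ remainders, and verifying that the kinematic error relation holds exactly in the full space rather than only after projection. If it holds only after projection, we will pass to the Galerkin-orthogonal component and absorb the leftover into the residual norm measured in $H_h'$.
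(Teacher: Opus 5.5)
Your proposal is correct and follows essentially the same discrete energy argument as the paper: error equations, testing with $\dot{\bm e}^{k-1+\zeta}$, the $(\zeta-\tfrac12)$-identity, the duality bound, and $e^0=\dot e^0=0$ (which the paper also simply asserts); the only difference is that you close the recursion by summing and absorbing with the running maximum $E_k$, whereas the paper divides each step by $\|e^k\|_E+\|e^{k-1}\|_E$ to get $\|e^k\|_E-\|e^{k-1}\|_E\le 2\Delta t\,\|r^k_{\mathrm{pr}}\|_{H_h'}$ and telescopes, and both routes give the same factor $2$. Your stated main obstacle does not arise: the reduced mass matrix $\bm M_{H,r}$ is invertible, so $\bm u_r^k-\bm u_r^{k-1}=\Delta t\,\dot{\bm u}_r^{k-1+\zeta}$ holds exactly in reduced coordinates, and after lifting with $\bm\Psi_V$ it holds exactly in $V_h$, so no projection fallback is needed.
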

Using Theorem~\ref{thm:state_apost_error_est}, an error estimator in the sense of~\eqref{eq:objective_error_est} can be derived directly by elementary calculations, cf.~\cite{qian2017certified,klein2026multi}. The proof is provided in Appendix~\ref{ssec:proof_obj_error_est}.
\begin{theorem}
\label{thm:J_apost_error_est}
Let the assumptions of Theorem~\ref{thm:state_apost_error_est} hold, and let $J_h(q_r)$ and $J_r(q_r)$ denote the objective functionals corresponding to $u_h$ and $u_r$, respectively. Then the estimate~\eqref{eq:objective_error_est} holds with
\begin{equation}
\label{eq:J_apost_error_est}
\objerrorest{q_r}{J_r(q_r)}
:=
\frac{\|\mathcal C\|_{\mathcal L(V,C)}^2}{2a_{q_r}}\,
\tildestateerrorest{q_r}^{\,2}
+
\|\mathcal C\|_{\mathcal L(V,C)}
\left(\frac{2J_r(q_r)}{a_{q_r}}\right)^{\frac12}
\tildestateerrorest{q_r},
\end{equation}
where
\begin{equation*}
\tildestateerrorest{q_r}
:=
\left(
\Delta t \sum_{k=1}^K (\errprim{k}{\bm})^\top \bm A_h(q_r)\errprim{k}{\bm}
\right)^{\frac12} \leq \stateerrorest{q_r},
\end{equation*}
and
\begin{equation*}
a_{q_r} = \frac{\tilde{\kappa}}{C_P^2} > 0,
\end{equation*}
with $C_P > 0$ denoting the Poincar\'e constant and $\tilde{\kappa} > 0$ the constant from Assumption~\ref{ass:ass_to_stored_energy}.
\end{theorem}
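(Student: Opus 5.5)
The estimate follows from expanding the difference of two quadratic functionals and controlling the observation error of the state through the energy seminorm of Theorem~\ref{thm:state_apost_error_est}. Writing $\F_h(q_r)^k = \C_h u_h^k$ and $\F_r(q_r)^k = \C_h u_r^k$, and using $u_h^k = u_r^k + \errprim{k}{}$, we expand, for each $k$,
\begin{equation*}
\|\C_h u_h^k - y_h^{\delta,k}\|_{C_h}^2 - \|\C_h u_r^k - y_h^{\delta,k}\|_{C_h}^2
= \|\C_h \errprim{k}{}\|_{C_h}^2 + 2\langle \C_h u_r^k - y_h^{\delta,k}, \C_h\errprim{k}{}\rangle_{C_h}.
\end{equation*}
Multiplying by $\Delta t/2$ and summing over $k$, we check that $J_r(q_r)$ as defined in \eqref{eq:J_r} coincides with $\frac{\Delta t}{2}\sum_k\|\C_h u_r^k - y_h^{\delta,k}\|^2_{C_h}$. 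This holds because $\bm C_r$ and $\bm{Cy}_r^{\delta,k}$ are Galerkin projections of the full quantities and $u_r^k \in V_r\subset V_h$. We then take absolute values and apply Cauchy--Schwarz in the discrete Bochner inner product \eqref{eq:disc_bochner_prods}:
\begin{equation*}
|J_h(q_r)-J_r(q_r)| \le \tfrac12 \|\C_h e\|^2_{C_h^K} + \|\C_h u_r - y_h^\delta\|_{C_h^K}\,\|\C_h e\|_{C_h^K}
= \tfrac12 \|\C_h e\|^2_{C_h^K} + \bigl(2J_r(q_r)\bigr)^{\frac12}\|\C_h e\|_{C_h^K}.
\end{equation*}

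Next we bound $\|\C_h e\|_{C_h^K}$. Since $\C_h$ is the $C$-orthogonal projection of $\C$ onto $C_h$, we have $\|\C_h v\|_{C_h}\le \|\C\|_{\mathcal L(V,C)}\|v\|_V$. It therefore suffices to prove
\begin{equation*}
\|v\|_V^2 \le \frac{1}{a_{q_r}}\,\bm v^\top \bm A_h(q_r)\bm v \qquad \text{for all } v\in V_h .
\end{equation*}
For this we use Assumption~\ref{ass:ass_to_stored_energy}. Linearity of $\nabla_Y C_q$, together with the coercivity \eqref{eq:coercivity} summed with the weights $q_p>0$ and the constraint $q^\top\kappa\ge\tilde\kappa$, should yield $\langle A(q)v,v\rangle \ge \tilde\kappa\|Jv\|^2_{L^2}$. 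Combined with the Poincaré inequality on $H^1_\Gamma$, i.e.\ $\|v\|_{V}\le C_P\|Jv\|_{L^2}$ (with $V$ normed by the full $H^1$ norm, absorbed into $C_P$), this gives the coercivity constant $a_{q_r}=\tilde\kappa/C_P^2$. Here we must use $q_r\in\qadr$, so that $\bm\Psi_Q\bm q_r\in\qad$ and the admissibility bounds apply.

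Summing over $k$ with weight $\Delta t$ then gives $\|\C_h e\|_{C_h^K}\le \|\C\|\,a_{q_r}^{-1/2}\,\tildestateerrorest{q_r}$. Inserting this into the Cauchy--Schwarz estimate yields exactly \eqref{eq:J_apost_error_est}.

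The remaining claim $\tildestateerrorest{q_r}\le\stateerrorest{q_r}$ follows from Theorem~\ref{thm:state_apost_error_est}, since $\bm e^{k\top}\bm A_h\bm e^k\le\|\errprim{k}{}\|_E^2$. The factor $\Delta t$ is harmless provided $\Delta t\le 1$, or alternatively one tracks the $\Delta t$ weighting carried inside the proof of that theorem.

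The main obstacle is the coercivity step. The bound \eqref{eq:coercivity} is stated only for symmetric $Y$, while $Jv$ is generally not symmetric. We will try first to exploit the structure of linear elasticity: $\nabla_Y C_p$ depends only on the symmetric part $\varepsilon(v)$. This gives $\langle A(q)v,v\rangle\ge\tilde\kappa\|\varepsilon(v)\|^2$, and Korn's inequality then provides the lower bound, with $C_P$ interpreted as the combined Korn--Poincaré constant.
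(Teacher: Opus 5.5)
Your proposal is correct and is essentially the paper's proof: you expand the difference of the two quadratic functionals, bound $\|\mathcal C_h e^k\|_{C_h}\le\|\mathcal C\|_{\mathcal L(V,C)}\|e^k\|_V$, pass to the $\bm A_h(q_r)$-seminorm via coercivity with $a_{q_r}=\tilde\kappa/C_P^2$, and close with Cauchy--Schwarz and $\|\mathcal C_h u_r-y_h^\delta\|_{C_h^K}=(2J_r(q_r))^{1/2}$. Your two caveats (Korn's inequality for non-symmetric $Jv$, and the side claim) go beyond the paper, which simply invokes its stated G{\aa}rding inequality at the coercivity step and never proves the side claim; note that tracking $\Delta t$ does not rescue that claim, since Theorem~\ref{thm:state_apost_error_est} only yields $\tildestateerrorest{q_r}\le\sqrt{\Delta t}\,\stateerrorest{q_r}$, so $\Delta t\le 1$ is genuinely needed to deduce it.
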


\section{Trust-Region IRGNM}
\label{sec:TR_IRGNM}
The central idea of the trust-region approach is to replace the full-order optimization problem $\eqref{eq:minJ_FOM}$ by a sequence of surrogate problems. These surrogate problems employ approximations of the full-order objective while restricting the admissible set to (local) trust regions $T^{(i)} \subset \Q$, thereby ensuring the (local) reliability of the surrogate. Such approaches have been successfully applied to a variety of problems, in particular to PDE-constrained optimization~\cite{YueMeerbergen2013, lieberman2010parameter, keil2021non, keil2024relaxed, banholzer2020adaptive, klein2026multi}. We therefore employ the \gls{IRGNM} outlined in Section~\ref{ssec:discretization} to solve a sequence of trust-region subproblems of the form
\begin{equation}
\label{eq:TR_Problem}
q_r^{(i+1)} \coloneqq \argmin_{q_r \in T^{(i)}} J_r^{(i)}(q_r),
\end{equation}
where $i \in \mathbb{I} := \{0, \dotsc, \termidxh - 1\}$, such that for $\termidxh$ the \emph{full-order discrepancy principle} is satisfied for some $\tau > 1$, that is (compare~\eqref{eq:discrepancy_principle}),
\begin{equation*}
J_h(q_r^{\termidxh}) \leq \frac{1}{2}(\tau \delta)^2 \leq J_h(q_r^\iteridx), \quad i \in \mathbb{I}.
\end{equation*}

As surrogates $J_r^{(i)}$, we employ \gls{RB}-\gls{ROM}s, as constructed in Section~\ref{ssec:modelreduction}, associated with iteratively enriched reduced state $V_r^{(i)} \subset V_h$ and reduced parameter spaces $Q_r^{(i)} \subset Q_h$. After each subproblem, the reduced spaces are updated to locally adapt the surrogate to $J_h$. Details on the construction of these spaces are provided in Section~\ref{ssec:enrichement}.

The subproblems are solved by applying the IRGNM, with the (approximate) solution of the previous subproblem as an initial guess and combined with a backtracking procedure to enforce the trust region; see~\cite[Section 4.1]{kartmann2025adaptivereducedbasistrust} for details. For each subproblem, this procedure yields a sequence of parameters converging to a local minimizer of $J_r^\iteridx$ in $T^\iteridx$. We denote the resulting sequence by $\{q_r^{(i,l)}\}_{l = 0}^{L^{(i)}} \subset Q^\iteridx_r$, with $q_r^{(i,0)} = q_r^{(i-1)}$. The \gls{IRGNM} iteration is terminated once the reduced discrepancy principle is satisfied, i.e.,
\begin{equation}
\label{eq:red_disc_princ}
J_r^{(i)}\bigl(q_r^{(i,l)}\bigr) < \frac{1}{2} \tilde{\tau}^2 \bigl(\tilde{\delta}^{(i)}\bigr)^2,
\end{equation}
where $\tilde{\delta}^{(i)} \geq \delta$ and $\tilde{\tau} > 1$, or when the iterates approach the boundary of the trust region sufficiently closely. The choice of the trust region is crucial for the efficiency of the algorithm. Although various definitions are possible, we employ a standard radius-based trust region, i.e.,
\begin{equation*}
T^{(i)} \coloneqq \left\{ q_r \in \qadri \,\bigg|\, \errest(q_r) := \|q_r - q_r^{(i)}\|_{Q_r} \leq \eta^{(i)} \right\},
\end{equation*}
for some radius $0 < \eta\iteridx \leq \eta_{\max}$, where $\qadri$ denotes the admissible parameter set for $Q^\iteridx_r$, cf.~\eqref{eq:def_reduced_admissiable_set}.

\begin{remark}
In contrast to other existing RB trust-region approaches, we do not employ an error-aware trust region, although a theoretically sound a posteriori error estimator for $J^\iteridx_r$ has been derived in Section~\ref{ssec:modelreduction}. While estimator-based trust regions are theoretically appealing, residual-based a posteriori error estimators are known to considerably overestimate the true error. This behavior has been observed for elliptic and parabolic problems and is even more pronounced in the hyperbolic setting considered here~\cite{klein2026multi, kartmann2025adaptivereducedbasistrust, kartmann_adaptive_2023, banholzer2020adaptive}. As a consequence, the resulting trust regions become overly conservative, leading to a substantial increase in both offline and online computational costs. Preliminary numerical experiments confirm this effect and indicate that an estimator-driven trust-region strategy is not practically viable for the problems under consideration. Despite this, note that the radius-based trust region still provides implicit control of the error $\objerrorest{q_r}{J_r(q_r)}$, since both $J_h$ and $J_r$ are continuous. Hence, there exists a constant $C > 0$ such that
\begin{equation*}
\objerrorest{q_r}{J_r(q_r)} \leq C \errest(q_r).
\end{equation*}
\end{remark}

An important caveat in the use of \gls{RB}-based surrogates within a trust-region framework is that the trust region constraint alone does not guarantee a decrease in the full-order objective. In particular, especially in flat regions of the solution manifold, for some $i < \termidxh - 1$,
\begin{equation*}
J_h(q_r^{(i+1)}) > J_h(q_r^\iteridx),
\end{equation*}
since the trust-region only ensures that $J_h$ and $J_r$ agree up to an error $\epsilon_J \geq 0$. Therefore, a solution of $\eqref{eq:TR_Problem}$ returned by the \gls{IRGNM} using the surrogate is treated as a \emph{trial solution} $\qtrial$. It must then be verified that
\begin{equation}
\label{eq:decay_cond}
J_h(\qtrial) < J_h(q_r^\iteridx),
\end{equation}
and the iterate is only accepted if $\eqref{eq:decay_cond}$ is satisfied, i.e., $q_r^{(i+1)} = \qtrial$. This strategy was originally introduced in~\cite{YueMeerbergen2013}. If $\qtrial$ is rejected, i.e., if $\eqref{eq:decay_cond}$ fails, the trust region is shrunk by setting
\begin{equation*}
\eta^{(i+1)} = \beta_3 \eta^\iteridx,
\end{equation*}
with $\beta_3 \in (0,1)$, and the subproblem $\eqref{eq:TR_Problem}$ is solved again starting from $q_r^\iteridx$. If, $\qtrial$ is accepted as $q_r^{(i+1)}$, the trust region is enlarged by setting
\begin{equation*}
\eta^{(i+1)} = \min\left(\eta_{\max}, \beta_3^{-1} \eta^\iteridx\right),
\end{equation*}
provided that
\begin{equation*}
\label{eq:TR_enlarge_cond}
\varrho^\iteridx \coloneqq \frac{J_h\bigl(q_r^\iteridx\bigr) - J_h\bigl(q_r^{(i+1)}\bigr)}
{J_r^\iteridx\bigl(q_r^\iteridx\bigr) - J_r^\iteridx\bigl(q_r^{(i+1)}\bigr)}
\geq \beta_2,
\quad \text{with } \beta_2 \in \left[\tfrac{3}{4}, 1\right),
\end{equation*}
Otherwise, it is kept unchanged.

\subsection{Enrichment Strategy}
\label{ssec:enrichement}

A key component of the TR-IRGNM is the construction of low-dimensional reduced spaces 
$Q_r^{(i)}$ and $V_r^{(i)}$ that maintain high approximation accuracy. In this work, 
we employ an \emph{adaptive enrichment} strategy in which the reduced bases are 
iteratively expanded using full-order snapshots obtained during the optimization. 
This yields nested sequences of spaces:
\begin{equation*}
Q_r^{(0)} \subset Q_r^{(1)} \subset \dots \subset Q_r^{\termidxh} \subset Q_h,
\quad
V_r^{(0)} \subset V_r^{(1)} \subset \dots \subset V_r^{\termidxh} \subset V_h.
\end{equation*}
The choice of enrichment snapshots is guided by the structure of the optimization 
problem. We initialize the parameter reduced space as
\begin{equation*}
Q_r^{(0)} \coloneqq \mathrm{span} \left\{ 
q_r^{(0)}, \, q_{\circ,h}, \, \nabla J_h\bigl(q_r^{(0)}\bigr)
\right\}.
\end{equation*}
During the enrichment process, we additionally include full-order gradients 
$\nabla J_h\bigl(q_r^{(i)}\bigr)$, since they represent directions of steepest descent 
and thus capture the most relevant local sensitivities of the objective functional. For the state space, we incorporate the corresponding primal and adjoint states $u_h\bigl(q_r^{(i)}\bigr)$ and $p_h\bigl(q_r^{(i)}\bigr)$. This construction ensures that, for all $i \in \mathbb{I}$, hold
\begin{equation}
\label{eq:tr_compliance_cond}
q_r^{(i)} \in Q_r^{(i)},
\quad
J_h\bigl(q_r^{(i)}\bigr) = J_r^{(i)}\bigl(q_r^{(i)}\bigr)
\quad 
\text{and} 
\quad 
\nabla J_h\bigl(q_r^{(i)}\bigr) = \nabla  J_r^{(i)}\bigl(q_r^{(i)}\bigr),
\end{equation}
ensuring first-order consistency of the surrogate at $q_r^{(i)}$. In particular, this construction implies ${q_r^{(i)} \in \mathcal{T}^{(i)}}$, hence $\mathcal{T}^{(i)} \neq \emptyset$, guarantying the well-posedness of the trust-region procedure. 

To limit the growth of the reduced state space and avoid the accumulation of irrelevant basis vectors, we apply \gls{POD} to retain the most relevant components from the collected snapshots up to a prescribed tolerance ${\epsPODstate > 0}$, cf.~\cite{Haasdonk2008,Haasdonk13}. In practice, we employ \gls{HaPOD} to enhance computational efficiency~\cite{himpe2018hierarchical}. We refer to~\cite{kartmann2025adaptivereducedbasistrust} for a detailed description of the enrichment procedure. The combination of adaptive enrichment and the trust-region framework yields the TR-IRGNM algorithm, whose main steps are summarized in Algorithm~\ref{algo:TR_IRGNM}.

\begin{algorithm2e}
\DontPrintSemicolon
\caption{TR-IRGNM}\label{algo:TR_IRGNM}
\small
\KwData{Noise level $\delta$, discrepancy parameter $\tau, \tilde{\tau} > 1$, initial guess $q_r^{(0)} \in \Q$, tolerance for enlargement of the radius $\beta_2 \in [3/4,1)$, shrinking factor $\beta_3 \in (0,1)$ and maximal trust region-radius $\eta_{\max} > 0$.\;}
Set $i = 0$ and initialize the RB-ROM using $Q_r^{(0)} \coloneqq \mathrm{span}(\{q_r^{(0)}, q_{\circ,h}, \nabla J_h(q_r^{(0)})\}) $ and $V_r^{(0)} \coloneqq \mathrm{span}(\{u_h(q_r^{(0)}), p_h(q_r^{(0)})\})$.\;
\While{$J_h(q_r^{(i)}) > \frac{1}{2}\tau^2 \delta^2$}{
    Solve \eqref{prob:prob_ROM} for $\qtrial$.\;
    \If{$J_h(\qtrial) \geq J_h(q_r^\iteridx)$}{
        Reject trial step, set $q_r^{(i+1)} \coloneqq q_r^\iteridx$, keep $Q_r^\iteridx$ and $V_r^\iteridx$ and shrink radius $\eta^{(i+1)} \coloneqq \beta_3\eta^\iteridx$.\;
    }
    \Else{
        Accept the trial step as $q_r^{(i+1)} \coloneqq \qtrial$, update $Q_r^\iteridx$ and $V_r^\iteridx$ at $q_r^{(i+1)}$.\;
        \If{$\varrho^\iteridx > \beta_2$}{
            Enlarge radius $\eta^{(i+1)} \coloneqq \min\left(\eta_{\max}, \beta_3^{-1} \eta^\iteridx\right)$.\;
        }
        \Else{
            Keep the trust-region radius unchanged, $\eta^{(i+1)} \coloneqq \eta^\iteridx$.\;
        }
    }    
    $i \leftarrow i + 1$. \label{line:final_line}\;	
    
}
\end{algorithm2e}

\section{Numerical experiments}
\label{sec:num_exper}
\subsection{Setup}
To assess the potential and limitations of the TR-IRGNM method combined with reduced-basis surrogates, we consider the inverse problem of identifying defects on the surface of a thin plate composed of a Cauchy elastic material \cite{lechleiter2017identifying, holzapfel2002nonlinear}. The goal is to reconstruct spatially distributed parameter fields corresponding to the Lamé coefficients from (artificial) measurement data. For our numerical experiments, we consider a thin plate of dimensions $\SI{6.7}{\milli\meter} \times \SI{1}{\meter} \times \SI{1}{\meter}$, which is fixed along the boundaries in $y$- and $z$-directions. The computational domain is defined as $\Omega = [-0.1,0.1] \times [-15,15] \times [-15,15]$. The Dirichlet boundary is set as
\begin{equation*}
    \Gamma \coloneqq \left\{ (x,y,z) \in \partial \Omega \;\middle|\; y = \pm 15 \;\text{or}\; z = \pm 15 \right\}.
\end{equation*}
We aim to simulate the displacement field $u$ over the time interval $[0,T]$, with $T \approx \SI{533}{\micro\second}$. For the implementation, a rescaled time interval ${[0,T] = [0,16]}$ is used. The plate is discretized using $4 \times 60 \times 60$ trilinear finite elements on an equidistant grid with nodes $(x_i, y_j, z_k)$, $i = 1, \dots, N_x$, $j = 1, \dots, N_y$, and $k = 1, \dots, N_z$. This yields a finite element space $V_h$ with $N_h = 55815$ degrees of freedom. For the temporal discretization, the interval $[0,16]$ is partitioned into $64$ equidistant time steps. Spatial integrals are approximated using  Gauss-Lobatto quadrature. For time integration, we employ the $\zeta$-scheme with $\zeta = 0.5$, corresponding to the Crank-Nicolson method.
Following~\cite{klein2021sequential, seydel2017identifying}, we model the stored energy function as
\begin{equation*}
    C(x,Y) = \sum_{i=0}^{N_x} \sum_{j=0}^{N_y} \sum_{k=0}^{N_z} 
    q_{i j k}\, \nu_{i j k}(x)\, \hat{C}(Y),
\end{equation*}
with coefficients $q_{i j k} > 0$. Here, $\hat{C}(Y)$ denotes an isotropic stored energy density. The functions $\nu_{i j k}$ are linear tensor-product B-splines used for the finite element discretization. They form a partition of unity, i.e.,
\begin{equation*}
    \sum_{i,j,k} \nu_{i j k}(x) = 1, \qquad x \in \Omega.
\end{equation*}
To model the elastic material, we adopt the standard linear elastic model. The stored energy density is then given by
\begin{equation*}
    \hat{C}(Y) = \frac{\lambda}{2} [\operatorname{tr}(Y)]^2 + \mu \|Y_{\mathrm{sym}}\|_F^2,
\end{equation*}
where
\begin{equation*}
    Y_{\mathrm{sym}} \coloneqq \tfrac{1}{2}(Y + Y^\top),
\end{equation*}
and $\lambda, \mu > 0$ are the Lamé parameters. For the material parameters, we choose ${\lambda = \SI{2.18e10}{\newton\per\square\meter}}$ and $\mu = \SI{1.12e10}{\newton\per\square\meter}$. The density is set to $\rho = \SI{2.70e3}{\kilogram\per\cubic\meter}$, cf.~\cite{lechleiter2017identifying}. A defect in the material is represented by $q_{i j k} \neq 1$. Since only delamination at the upper surface is considered, we impose
\begin{equation*}
    q_{i j k} = 1, \qquad \text{for all } i \geq 1,\; j,\; k,
\end{equation*}
i.e., only coefficients associated with the upper layer may vary. Hence, the parameters of interest reduce to
\begin{equation*}
    q_p := q_{0 j k}, \qquad p = 1, \dots, N_Q, \qquad N_Q \coloneqq N_y N_z = 3721.
\end{equation*}
It can be verified directly that $C(x,Y)$ satisfies Assumption~\ref{ass:ass_to_stored_energy}. In particular, the inequalities~\eqref{eq:coercivity} hold for $\hat{C}(Y)$ with
\begin{equation*}
    \kappa_p = 2\mu, \qquad \mu_p = 3\lambda + 2\mu.
\end{equation*}
Thus, we can define the set of admissible parameters as
\begin{equation*}
    \Qad \coloneqq \left\{ q \in \Q \,\middle|\, 10^{-20} \leq q_p \leq 10^{20} \;\; \text{for all } p = 1, \dots, N_Q \right\}.
\end{equation*}

\begin{figure}
    \centering
    \resizebox{0.7\linewidth}{!}{%
        \begin{tikzpicture}[
    >=stealth,
    line cap=round,
    line join=round,
    font=\large,
    every node/.style={font=\large}
]

\colorlet{plotcolor}{blue!60!black}

\def\xL{-4.5}
\def\xR{4.5}
\def\xtick{4.0}
\def\xeps{0.066}

\def\ymax{1.1}
\def\ypeak{0.9}
\def\yhalf{0.65}
\def\h{0.75}
\def\vsep{1.5cm}

\begin{scope}[yshift=2*\vsep]
  \draw[->] (\xL,0) -- (\xR,0) node[right] {$y$};
  \draw[->] (0,0) -- (0,\ymax);

  \draw[gray!55] (-\xtick,0) -- (-\xtick,\h);
  \draw[gray!55] ( \xtick,0) -- ( \xtick,\h);

  \node[below] at (-\xtick,0) {$-\SI{0.5}{\meter}$};
  \node[below] at ( \xtick,0) {$\SI{0.5}{\meter}$};
  \node[left]  at (0,\ypeak) {$1$};
  \node[right] at (0,0.88*\ypeak) {$l_y$};

  \draw[plotcolor,thick] (-\xtick,0) -- (-\xeps,0);
  \draw[plotcolor,thick] (-\xeps,0) -- (0,\ypeak);
  \draw[plotcolor,thick] (0,\ypeak) -- (\xeps,0);
  \draw[plotcolor,thick] (\xeps,0) -- (\xtick,0);
\end{scope}

\begin{scope}[yshift=\vsep]
  \draw[->] (\xL,0) -- (\xR,0) node[right] {$z$};
  \draw[->] (0,0) -- (0,\ymax);

  \draw[gray!55] (-\xtick,0) -- (-\xtick,\h);
  \draw[gray!55] ( \xtick,0) -- ( \xtick,\h);

  \node[below] at (-\xtick,0) {$-\SI{0.5}{\meter}$};
  \node[below] at ( \xtick,0) {$\SI{0.5}{\meter}$};
  \node[left]  at (0,\ypeak) {$1$};
  \node[right] at (0,0.88*\ypeak) {$l_z$};

  \draw[plotcolor,thick] (-\xtick,0) -- (-\xeps,0);
  \draw[plotcolor,thick] (-\xeps,0) -- (0,\ypeak);
  \draw[plotcolor,thick] (0,\ypeak) -- (\xeps,0);
  \draw[plotcolor,thick] (\xeps,0) -- (\xtick,0);
\end{scope}

\begin{scope}[shift={(-4.0,-0.2)}]
  \draw[->] (0,0) -- (8.5,0) node[right] {$t$};
  \draw[->] (0,0) -- (0,\ymax);

  \draw[gray!55] (0.25,0) -- (0.25,\h);
  \draw[gray!55] (8.00,0) -- (8.00,\h);

  \node[left]  at (0,\yhalf) {$0.5$};
  \node[below] at (0.25,0) {$\SI{17}{\micro\second}$};
  \node[right] at (0.25,1.06*\yhalf) {$l_t$};
  \node[below] at (8.00,0) {$\SI{533}{\micro\second}$};

  \draw[plotcolor,thick] (0,0) -- (0.25,\yhalf);
  \draw[plotcolor,thick] (0.25,\yhalf) -- (0.25,0);
  \draw[plotcolor,thick] (0.25,0) -- (8.00,0);
\end{scope}

\end{tikzpicture}
    }
    \caption{\label{fig:excite_func}Spatial and temporal factor functions $l_y$, $l_z$, and $l_t$ of the excitation signal in the lateral ($y,z$) and temporal ($t$) domains, adapted from~\cite{seydel2017identifying}.}
\end{figure}
Our goal is to reconstruct the full parameter field $q_{0jk}$ by exciting the plate broadband signal of the form
\begin{equation*}
    l(t,x,y,z) = l_t(t)l_y(y)l_z(z)
    \begin{pmatrix}
    1 \\
    0 \\
    0
    \end{pmatrix}
\end{equation*}
emitted at the center of the plate the induced displacement field measured, cf.~\cite{seydel2017identifying}. The functions $l_t, l_y, l_z$ are shown in Figure~\ref{fig:excite_func}. We consider three different observation scenarios. As a first benchmark example we assume that the full displacement field is observed (\textit{Experiment 1}), i.e., the observation operator $\opall$ is given by the canonical embedding \eqref{eq:full_data_obs_op}. As a second class of experiments (\textit{Experiment 2}), the parameters are reconstructed from partial measurements of the displacement field on the upper surface ($x = -0.1$). These measurements are modeled by the observation operator \eqref{eq:sensor_data_obs_op}, where $g_c \in L^2(\partial\Omega;\mathbb{R}^3)$ denotes the finite-element basis functions associated with the corresponding boundary vertices. Hence, sensors are placed at
\begin{equation*}
  \{ (x,y,z) \mid x = -0.1,\ (y,z) \in E \}.
\end{equation*}
We consider two sensor configurations:

\textit{(a) Grid configuration}: Sensors are placed on a regular grid overing the measurement surfaces uniformly. The index set is
\begin{equation*}
  E_{\mathrm{grid}}
  := \{-14,-10,\dots,10,14\} \times \{-14,-10,\dots,10,14\},
\end{equation*}
resulting in $N_c = 64$ measurement points.

\textit{(b) Edge–only configuration}: Sensors are placed along the boundary lines of the surface.  
In this case $N_c = 4 \cdot 28$, with
\begin{equation*}
  E_{\mathrm{edge}}
  :=
    \bigl( \{-14,14\} \times \{-14,\dots,14\} \bigr)
    \,\cup\,
    \bigl( \{-14,\dots,14\} \times \{-14,14\} \bigr).
\end{equation*}

\begin{figure}
    \centering
    \resizebox{0.9\textwidth}{!}{
        \begin{tikzpicture}
    \begin{groupplot}[
        group style={
            group size=2 by 1,
            horizontal sep=2cm
        },
        width=7.5cm,
        height=7.5cm,
        scale only axis,
        xmin=-0.5, xmax=0.5,
        ymin=-0.5, ymax=0.5,
        axis equal image,
        axis lines=box,
        xtick={-0.5,0,0.5},
        ytick={-0.5,0,0.5},
        tick align=outside,
        tick pos=left,
        enlargelimits=false,
        tick style={line width=1.2pt, draw=black},
        label style={font=\huge},
        tick label style={font=\huge},
        title style={font=\huge}
    ]
    
    \nextgroupplot[
        title={$E_{\text{grid}}$},
        line width=1.2pt,
        xlabel={$z\,[\mathrm{m}]$},
        ylabel={$y\,[\mathrm{m}]$},
        ylabel style={yshift=-15pt},
    ]
    
    \foreach \yy in {-14,-10,...,14} {
      \foreach \zz in {-14,-10,...,14} {
        \addplot[
            only marks,
            mark=square*,
            mark size=2.0pt,
            color=blue!60!black
        ] coordinates {(\zz/30,\yy/30)};
      }
    }
    
    \nextgroupplot[
        title={$E_{\text{edge}}$},
        line width=1.2pt,
        xlabel={$z\,[\mathrm{m}]$},
        ylabel={},
        yticklabels=\empty
    ]
    
    \foreach \yy in {-14,14} {
      \foreach \zz in {-14,-13,...,14} {
        \addplot[
            only marks,
            mark=square*,
            mark size=2.0pt,
            color=blue!60!black
        ] coordinates {(\zz/30,\yy/30)};
      }
    }
    
    \foreach \yy in {-14,-13,...,14} {
      \foreach \zz in {-14,14} {
        \addplot[
            only marks,
            mark=square*,
            mark size=2.0pt,
            color=blue!60!black
        ] coordinates {(\zz/30,\yy/30)};
      }
    }
    
    \end{groupplot}
\end{tikzpicture}
    }
    \caption{Sensor configurations at $x = -0.1$ in the $y$–$z$ cross-section: uniform grid arrangement $E_\text{grid}$ (left) and edge-only arrangement $E_\text{edge}$ (right), showing the spatial distribution of measurement points.}
\end{figure}

\subsection{Experiments}
The synthetic noisy measurements are generated by first computing the exact solution ${u_h^\mathsf{e} \in V_h^K}$ corresponding to $q_h^\mathsf{e} \in Q_h$, and then adding uniformly distributed random noise ${\xi \in C_h^K \setminus \{0\}}$ with magnitude $\delta > 0$. Specifically, we define
\begin{equation*}
y^\delta \coloneqq \mathcal{C}_h u_h^\mathsf{e} + \delta\frac{\xi}{\|\xi\|_{C_h^K}}.
\end{equation*}
For each experiment, we compare the full-order IRGNM (FOM-IRGNM) and the trust-region IRGNM (TR-IRGNM), using $\epsPODstate = 10^{-3}$ as \gls{POD}-tolerance in the enrichment. In all setups, the reconstruction uses $q^{(0)} \equiv 1$ as an initial guess, corresponding to a defect-free plate. This also serves as the reference parameter for the Tikhonov regularization term, i.e., $q_\circ \equiv 1$. The regularization constant is initialized as $10^{-5}$. For all experiments, we fix $\theta = 0.4$ and  $\Theta = 1.95$, which control the adaptive update of the regularization parameter $\alpha > 0$. For the trust-region strategy, we choose
\begin{align*}
\eta^{(0)} &= 0.5, \quad \eta_\text{max} = 2, 
\quad \beta_2 = 0.75, \quad \beta_3 = 0.5.
\end{align*}
The discretized linearized problems~\eqref{prob:lin_prob_FOM} and \eqref{prob:lin_prob_rom} were solved by a projected gradient descent method. The step size was selected via a Barzilai--Borwein-type line search~\cite{azmi2023nonmonotone}. The optimization procedure was terminated when the first-order optimality condition was satisfied, when a maximum of $250$ iterations was reached, or when the relative change in the objective fell below $10^{-4}$. The parameter fields obtained from the FOM-IRGNM are denoted by $q^{\text{\footnotesize FOM}} \coloneqq q_h^{(\termidx)}$, and those obtained from the TR-IRGNM by $q^{\text{\footnotesize TR}} \coloneqq q_r^{(\termidx)}$.

The construction of the computational grid and the assembly of the full-order matrices are implemented in \texttt{C++} using the finite element library \texttt{deal.II} (version~9.6.0) \cite{2024:africa.arndt.ea:deal}. The time-stepping solver and the optimization and model order reduction algorithms are implemented in \texttt{Python} using the \texttt{pyMOR} framework~\cite{doi:10.1137/15M1026614}. The source code is available at \cite{klein2026adaptive}. All experiments were conducted on the PALMA~II high-performance computing cluster at the University of Münster, funded by the German Research Foundation (DFG, INST~211/667-1). Computations were performed on nodes equipped with Intel Skylake Gold~6140 CPUs clocked at 2.30~GHz and 92~GB RAM.

\subsection{Large Inclusion}
\label{ssec:large_inclusion}
In the first example, we consider a single large rectangular inclusion embedded into an otherwise homogeneous material. We set $q_{ijk} = \tilde{q}$ for all indices $i,j,k$ such that the support of $\nu_{ijk}$ is contained in $\{-0.1\} \times [-10,-5] \times [-10,10]$, see Figure~\ref{fig:q_exact_large_inclusions}. We compare two scenarios in which the material properties within this region are modified: in the first case (I), the material is locally stiffened with $\tilde{q} = 2.0$, whereas in the second case (II), it is softened with $\tilde{q} = 0.5$. In both cases, we assume a relative noise level of $1\%$, i.e.,
\begin{equation*}
\delta = 10^{-2} \,\|\mathcal{C}_h u_h^{\mathsf{e}}\|_{\mathcal{C}_h^K}.   
\end{equation*}
We choose $\tau = 1.1$ for Experiments~1 and~2(a), and $\tau = 2.0$ for Experiment~2(b). All runs satisfy the stopping criterion~\eqref{eq:discrepancy_principle}.

\begin{figure}
    \centering
    \includegraphics[width = 0.9\textwidth]{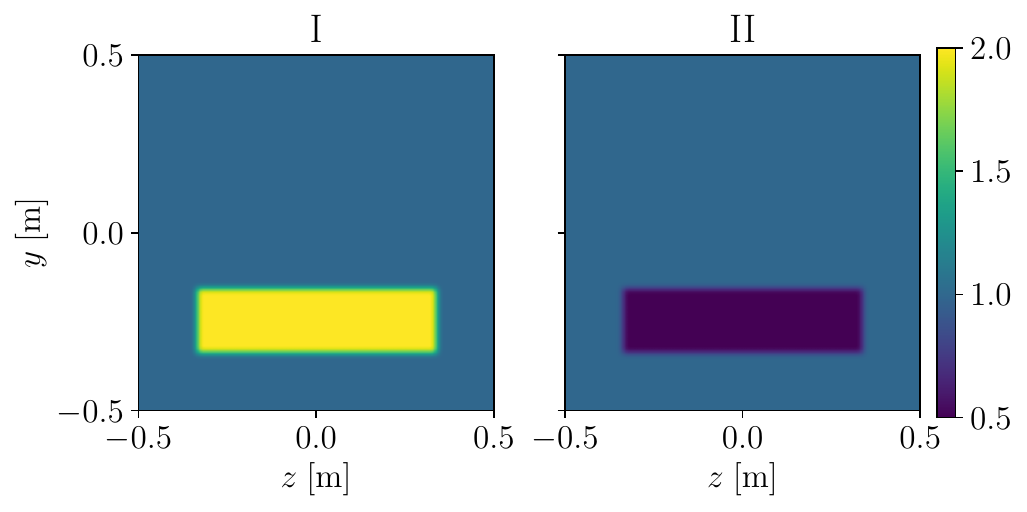}
    \caption{Exact parameter $\qex$ for a large square-shaped inclusion located in $\{-0.1\} \times [-10,-5] \times [-10,10]$. The parameter takes the constant value $\tilde{q}$; Left: stiff inclusion with $\tilde{q}=2.0$. Right: soft inclusion with $\tilde{q}=0.5$.}
    \label{fig:q_exact_large_inclusions}
\end{figure}

Figures~\ref{fig:elasticity_alu_large_patch_stiff} and~\ref{fig:elasticity_alu_large_patch_soft} show the reconstructed parameters for the stiff (I) and soft (II) inclusion, respectively, obtained with FOM-IRGNM (left) and TR-IRGNM (middle), together with the corresponding difference field (right) for the various data sets. 
Table~\ref{tab:results_large_inclusions} summarizes the corresponding performance metrics. In particular, the relative reconstruction error in the $Q_h$-norm,
\begin{equation*}
    \frac{\|q^{\text{\footnotesize FOM}} - q^{\text{\footnotesize TR}}\|_{Q_h}}
         {\|q^{\text{\footnotesize FOM}}\|_{Q_h}},
\end{equation*}
as well as the total runtime and the resulting speed-up relative to the full-order IRGNM. Furthermore, we report the number of full-order solves of the (linearized) PDE, the final dimensions of the reduced parameter and state spaces $(n_Q, n_V)$, the number of outer IRGNM iterations, and the total number of inner iterations accumulated over all outer iterations.

\begin{table}[b]
\setlength{\tabcolsep}{3.5pt}
\centering
\small
\begin{tabularx}{\textwidth}{
l l
c c
c
c
c c
c c
}
\toprule
Setup & Method
& $Q_h$ rel. err. & time [s]
& speed-up
& \#FOM sol.
& $n_Q$ & $n_V$
& o. iter. & tot. iter. \\
\midrule
\multicolumn{10}{l}{\textbf{Experiment 1:} $\mathcal{C} = \opall$}\\
\midrule
I & FOM & -- & 50114 & -- & 1393 & -- & -- & 24 & 24 \\
  & TR & 2.16e-02 & 6161 & 8.13 & 66 & 16 & 1004 & 16 & 45 \\
   
II & FOM & --        & 34282 & --    & 1018 & -- & --   & 18 & 18 \\
   & TR  & 2.18e-02 & 3020  & 11.35 & 35   & 9  & 842  & 8  & 34 \\

\midrule

\multicolumn{10}{l}{\textbf{Experiment 2 (a):} $\mathcal{C} = \opsensors, E = E_{\mathrm{grid}}$}\\
\midrule
I & FOM & -- & 59838 & -- & 1502 & -- & -- & 28 & 28 \\
  & TR & 4.43e-02 & 6369 & 9.39 & 78 & 19 & 800 & 19 & 61 \\
   
II & FOM & -- & 45358 & -- & 1181 & -- & -- & 21 & 21 \\
   & TR & 2.57e-02 & 3173 & 14.29 & 53 & 12 & 646 & 12 & 44 \\

\midrule

\multicolumn{10}{l}{\textbf{Experiment 2 (b):} $\mathcal{C} = \opsensors, E = E_{\mathrm{edge}}$}\\
\midrule
I  & FOM & -- & 44101 & -- & 1310 & -- & -- & 27 & 27 \\
   & TR & 2.17e-02 & 6076 & 7.26 & 68 & 15 & 924 & 15 & 58 \\
II & FOM & -- & 33677 & -- & 1059 & -- & -- & 22 & 22 \\
   & TR & 1.76e-02 & 2569 & 13.11 & 45 & 10 & 569 & 10 & 46 \\
\bottomrule
\end{tabularx}
\caption{Comparison of \gls{FOM}-\gls{IRGNM} and \gls{TR}-\gls{IRGNM} for large inclusions. Reported are relative errors in $Q_h$, runtimes, speed-ups, number of FOM solves, reduced dimensions $(n_Q, n_V)$, and iteration counts.}
\label{tab:results_large_inclusions}
\end{table}

The influence of the available data on the quality of the reconstructed parameter field is clearly visible. When the full displacement field is observed (Experiment~1), both the position and shape of the large inclusion, as well as the local values of the Lamé coefficients, are reconstructed with high accuracy. In contrast, reconstructions based on sensor measurements exhibit noticeable artifacts. While the general location of the inclusion can still be identified, its shape and the values of the reconstructed parameter field deviate significantly. In particular, for the sensor grid, the artifacts appear to be random, which suggests that they are primarily induced by noise. This indicates that the choice $\tau = 1.1$ in the stopping criterion may already be too small. The relative errors reported in Table~\ref{tab:results_large_inclusions} (Column~3) show that, across all experiments, the parameter fields reconstructed by the trust-region approach closely match those obtained with the full-order IRGNM. The relative reconstruction error in the $Q_h$-norm remains below $5\%$ for all runs. It can thus be concluded that these artifacts are not caused by the \gls{RB} surrogate. This suggests that the trust-region strategy provides sufficiently accurate surrogate control for reliable parameter reconstruction.

Regarding the computational time, TR-IRGNM is generally faster than FOM-IRGNM primarily due to the reduced number of full-order PDE solves. In fact, the majority of full-order solves can be attributed to the computation of the primal and adjoint snapshots $u_h(q_r^{\iteridx})$ and $p_h(q_r^{\iteridx})$. The observed speed-ups range between $7.16$ and $14.29$, with larger gains observed for the soft inclusion (II) than for the hard one (I). Compared with the pure online efficiency of standalone RB-ROMs, these speed-ups are relatively modest, as such methods often achieve speed-ups of the order $\mathcal{O}(10^1)$ to $\mathcal{O}(10^2)$.However, such a comparison is not entirely appropriate, since \gls{TR}-\gls{IRGNM} does not involve a distinct offline phase. Instead, the trust-region optimization alternates between offline phases (re-adaptation of the \gls{RB}-\gls{ROM}) and online phases (execution of the Gauss--Newton iterations). When compared to other trust-region-type optimization methods, the observed speed-ups appear to be reasonable~\cite{kartmann_adaptive_2023, kartmann2025adaptivereducedbasistrust, keil2024relaxed,keil2021non}.

Moreover, we observe here, and discuss further in Section~\ref{ssec:dep_on_noise_level}, that the achieved speed-ups are negatively correlated with the total number of iterations. This behavior is to be expected: a larger number of iterations required for convergence leads to more frequent updates of the \gls{RB}-\gls{ROM} and, consequently, to higher reduced-basis dimensions. This, in turn, both increases the cost of the offline phases and slows down the online evaluation.

\subsection{Point defect}

\begin{figure}
    \centering
    \includegraphics[width = \textwidth]{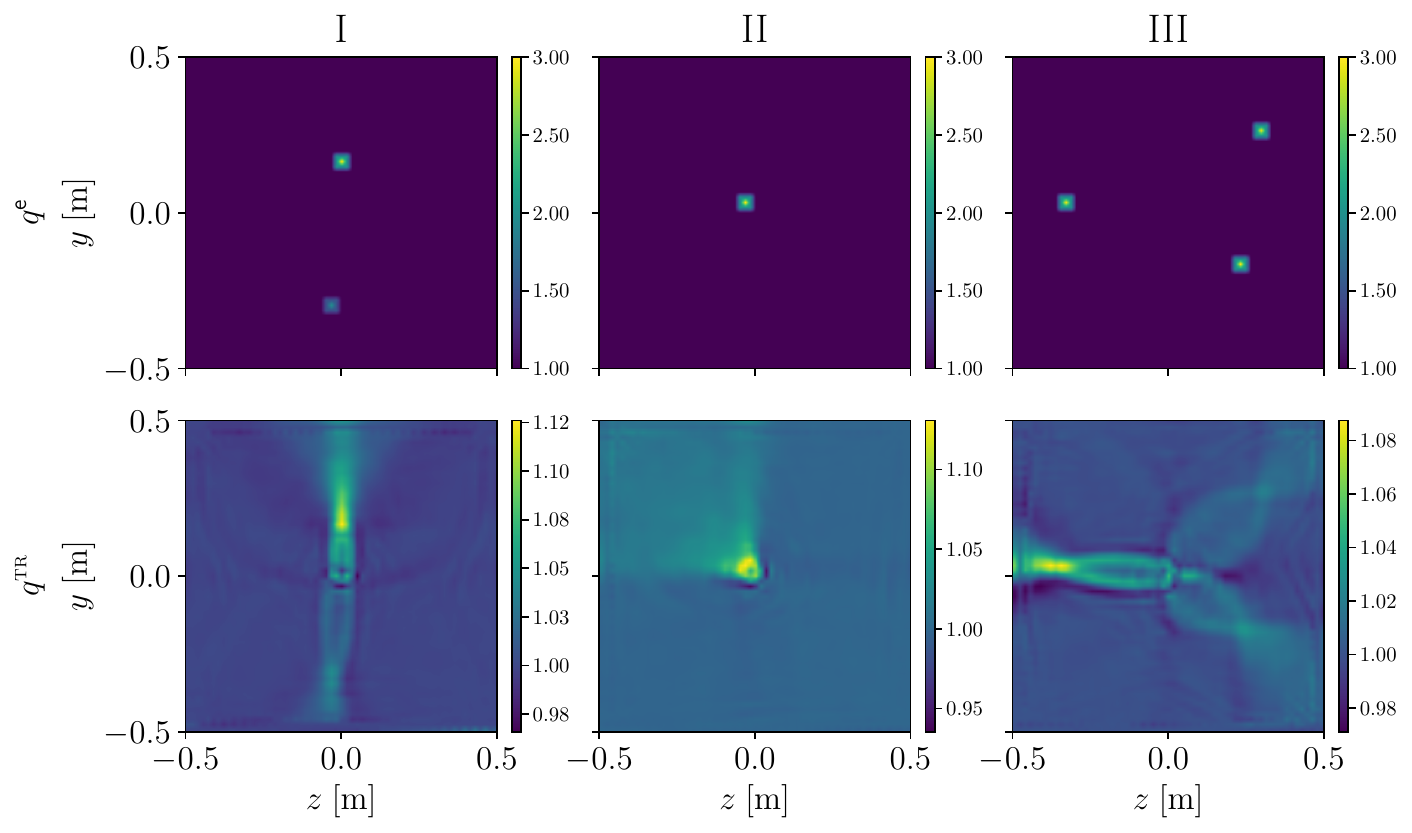}
    \caption{Exact parameter field $\qex$ (top) and reconstructed parameter fields $q^{\text{\footnotesize TR}}$ (bottom) by \gls{TR}-\gls{IRGNM} for the point defect problems.}
    \label{fig:point_defects_q_exact_and_q_TR}
\end{figure}

As a second example, we investigate the performance of the \gls{TR}-\gls{IRGNM} in scenarios inspired by \cite{seydel2017identifying, klein2021sequential, binder2015defect}, namely configurations with highly localized defects. To this end, we consider three configurations of defects. For the early stopping we have considered again $\tau = 1.1$ for Experiments~1 and~2(a), and $\tau = 2.0$ for Experiment~2(b).  

For the first setup~I, we introduce two localized inclusions. The first inclusion is assigned the value $3.0$ and is centered at $(5.0, 0.0)$, while the second has value $2.0$ at $(-9.0, -1.0)$. The inclusions are constructed according to the following rule: a single central node is assigned the full inclusion value, while its eight directly adjacent neighboring nodes are assigned values halfway between the inclusion value and the background value. This yields a small transition region around each inclusion, thereby yielding a localized but smooth transition to the background. The remaining configurations are constructed analogously. In setup~II, we consider a single defect centered at $(1.0, -1.0)$ with value $3.0$. In setup~III, we consider three defects located at $(1.0, -10.0)$, $(-5.0, 7.0)$, and $(8.0, 9.0)$, all assigned the value $3.0$.

\begin{table}
\setlength{\tabcolsep}{3.5pt}
\centering
\small
\begin{tabularx}{\textwidth}{
l l
c c
c
c
c c
c c
}
\toprule
Setup & Method
& $Q_h$ rel. err. & time [s]
& speed-up
& \#FOM sol.
& $n_Q$ & $n_V$
& o. iter. & tot. iter. \\
\midrule
\multicolumn{10}{l}{\textbf{Experiment 1:} $\mathcal{C} = \opall$}\\
\midrule
I   & FOM & -- & 12039 & -- & 329 & -- & -- & 6 & 6 \\
    & TR & 3.70e-03 & 854 & 14.09 & 18 & 4 & 282 & 4 & 10 \\
II  & FOM & -- & 11787 & -- & 325 & -- & -- & 6 & 6 \\
    & TR & 4.15e-03 & 1145 & 10.29 & 22 & 5 & 332 & 5 & 14 \\
III & FOM & -- & 7574 & -- & 225 & -- & -- & 4 & 4 \\
    & TR & 2.16e-03 & 711 & 10.65 & 14 & 3 & 229 & 3 & 6 \\
\midrule

\multicolumn{10}{l}{\textbf{Experiment 2 (a):} $\mathcal{C} = \opsensors, E = E_{\mathrm{grid}}$}\\
\midrule
I   & FOM & -- & 11046 & -- & 374 & -- & -- & 6 & 6 \\
    & TR & 3.41e-03 & 1074 & 10.28 & 21 & 5 & 272 & 5 & 11 \\
II  & FOM & -- & 18079 & -- & 495 & -- & -- & 11 & 11 \\
    & TR & 2.53e-03 & 1139 & 15.87 & 22 & 5 & 272 & 5 & 15 \\
III & FOM & -- & 14215 & -- & 378 & -- & -- & 6 & 6 \\
    & TR & 4.64e-03 & 754 & 18.84 & 17 & 4 & 236 & 4 & 10 \\
\midrule

\multicolumn{10}{l}{\textbf{Experiment 2 (b):} $\mathcal{C} = \opsensors, E = E_{\mathrm{edge}}$}\\
\midrule
I   & FOM & -- & 14238 & -- & 466 & -- & -- & 11 & 11 \\
    & TR & 2.75e-03 & 672 & 21.17 & 14 & 3 & 198 & 3 & 14 \\
II  & FOM & -- & 13425 & -- & 472 & -- & -- & 12 & 12 \\
    & TR & 3.38e-03 & 612 & 21.91 & 14 & 3 & 198 & 3 & 15 \\
III & FOM & -- & 12674 & -- & 420 & -- & -- & 9 & 9 \\
    & TR & 2.02e-03 & 712 & 17.80 & 14 & 3 & 198 & 3 & 12 \\
\bottomrule
\end{tabularx}
\caption{Comparison of \gls{FOM}-\gls{IRGNM} and \gls{TR}-\gls{IRGNM} for point defect problems. Metrics are as in Table~\ref{tab:results_large_inclusions}.}
\label{tab:results_point_defects}

\end{table}

All runs converged with respect to the prescribed stopping criteria. Figure~\ref{fig:point_defects_q_exact_and_q_TR} shows the exact parameter fields $\qex$ for the three setups (top) and the corresponding reconstructed parameter fields (bottom) obtained using \gls{TR}-\gls{IRGNM} using boundary sensor measurements (Experiment~2(b)). Additionally, the reconstructions for setup~I based on full-field and grid sensor data are shown in Figure~\ref{fig:elasticity_alu_point_defects_I}. The corresponding plots for the other setups are omitted, since the qualitative behavior is consistent across all three cases. Table~\ref{tab:results_point_defects} reports the same metrics as in the previous section.

The observations for the point-defect configurations are largely consistent with those obtained for the large inclusions. However, \gls{TR}-\gls{IRGNM} performs slightly better in this regime: the relative errors with respect to \gls{FOM}-\gls{IRGNM} are smaller than in Section~\ref{ssec:large_inclusion} (below $1\%$), while for Experiments~2(a) and~2(b) higher speedups are achieved, reaching up to $21.91$. This improvement can be attributed to the smaller number of iterations required for convergence, which reduces the relative cost of the offline phase and results in smaller final reduced-basis dimensions.

\subsection{Dependence on noise level}
\label{ssec:dep_on_noise_level}

As a final experiment, we investigate the performance of the \gls{TR}-\gls{IRGNM} for different noise levels. As a benchmark, we consider setup~I for the detection of point defects introduced in the previous section. The noise level is set as
\begin{equation*}
\delta = \relnoise \, \|\mathcal{C}_h u_h^{\mathsf{e}}\|_{\mathcal{C}_h^K},
\end{equation*}
with $\relnoise \in \{0,\; 10^{-3},\; 5 \cdot 10^{-3},\; 10^{-2}\}$ and $\tau = 2.0$. Larger noise levels were also tested, however, in these cases the discrepancy principle was already satisfied at the initial iterate $q_r^{(0)}$.

\begin{figure}[b]
    \centering
    \includegraphics[width=\textwidth]{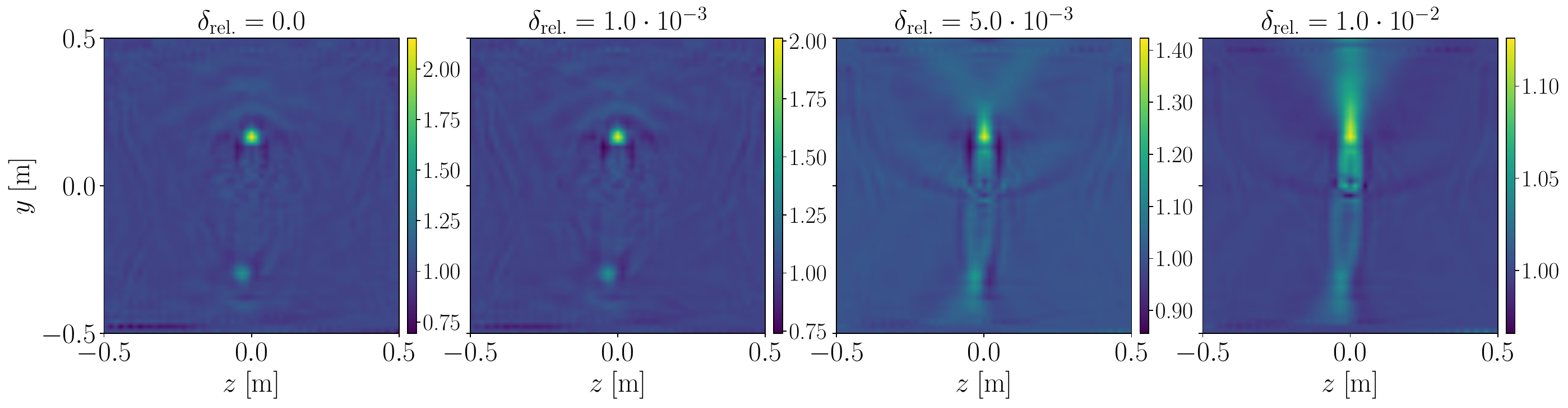}
    \caption{Reconstructed parameter fields for different relative noise levels obtained by \gls{TR}-\gls{IRGNM}.}
    \label{fig:reconst_param_noise_level}
\end{figure}

We employ edge sensor measurements (Experiment~2(b)) and compare the performance of the \gls{FOM}-\gls{IRGNM} and the \gls{TR}-\gls{IRGNM}. All runs were terminated after 24 hours if convergence had not been achieved earlier. For $\relnoise \in \{10^{-3}, 5\cdot10^{-3},\,10^{-2}\}$, both methods converged. For $\relnoise = 0$, neither method converged within the allotted time. Consequently, the speed-up values reported in Table~\ref{tab:results_noise_level} are meaningful only for $\relnoise \in \{5\cdot10^{-3},\,10^{-2}\}$. For $\relnoise = 10^{-3}$, the reported value represents an upper bound, and for $\relnoise = 0$ no meaningful speed-up can be assigned.

\begin{table}
\setlength{\tabcolsep}{3.5pt}
\centering
\small
\begin{tabularx}{\textwidth}{
l l
c c
c
c
c c
c c
}
\toprule
$\relnoise$ & Method
& $Q_h$ rel.\ err. & time [s]
& speed-up
& \#FOM sol.
& $n_Q$ & $n_V$
& o.\ iter. & tot.\ iter. \\
\midrule
0.0  & FOM & -- & 80577 & -- & 2622 & -- & -- & 45 & 45 \\
     & TR & 1.49e-02 & 86122 & -- & 229 & 57 & 2216 & 56 & 303 \\
1e-3 & FOM & -- & 57591 & -- & 1798 & -- & -- & 34 & 34 \\
     & TR & 5.21e-03 & 18907 & 3.05 & 130 & 32 & 1213 & 32 & 161 \\
5e-3 & FOM & -- & 23121 & -- & 789 & -- & -- & 18 & 18 \\
     & TR & 7.70e-03 & 1185 & 19.50 & 22 & 5 & 268 & 5 & 30 \\
1e-2 & FOM & -- & 14195 & -- & 466 & -- & -- & 11 & 11 \\
     & TR & 2.75e-03 & 605 & 23.45 & 14 & 3 & 198 & 3 & 14 \\
\bottomrule
\end{tabularx}
\caption{Comparison of \gls{FOM}-\gls{IRGNM} and \gls{TR}-\gls{IRGNM} for a point defect, setup I, with varying relative noise levels $\relnoise \in \{0, 10^{-3}, 5 \cdot 10^{-3}, 10^{-2}\}$.}
\label{tab:results_noise_level}
\end{table}

Figure~\ref{fig:reconst_param_noise_level} shows the reconstructed parameter fields obtained with the \gls{TR}-\gls{IRGNM}. As expected, the reconstruction quality improves as the noise level decreases: the two point defects become more sharply localized and the local Lam\'e parameters converge toward their true values. However, Table~\ref{tab:results_noise_level} shows that the computational efficiency of the \gls{TR}-\gls{IRGNM} deteriorates as the noise level decreases.

\begin{figure}[b]
    \centering
    \includegraphics[width = \textwidth]{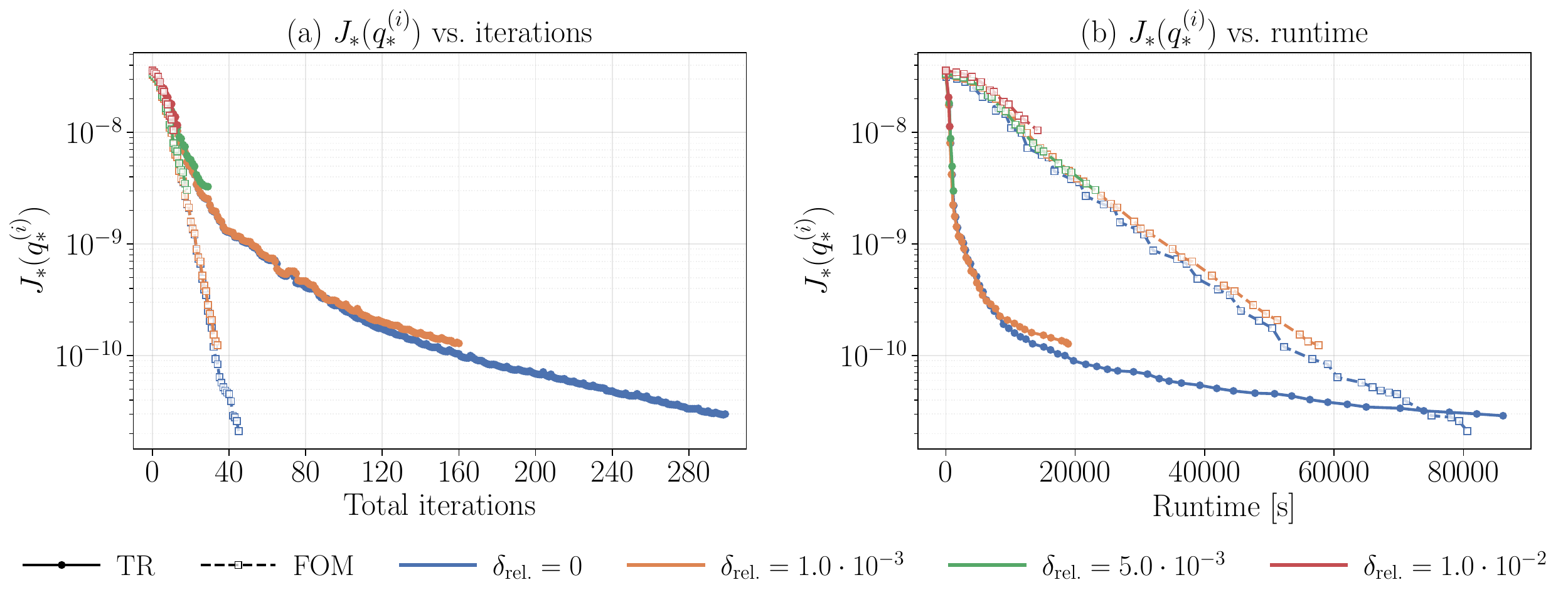}
    \caption{Objective $J_\ast(q_\ast^{(i)})$ plotted against total iterations (a) and against runtime (b). The colors indicate noise levels, while line styles and markers distinguish \gls{TR} and \gls{FOM}.}
    \label{fig:objective_comparison}
\end{figure}

To further analyse this behaviour, Fig.~\ref{fig:objective_comparison} shows the objective functional $J_\ast(q_\ast^{(i)})$ plotted against both the total number of iterations (a) and the total runtime (b). During the early iterations, the \gls{TR}-\gls{IRGNM} achieves a reduction in the objective comparable to that of the \gls{FOM}-\gls{IRGNM}. However, as the optimization progresses, the two methods diverge: the \gls{TR}-\gls{IRGNM} exhibits progressively smaller decreases in the objective, while the \gls{FOM}-\gls{IRGNM} maintains a more consistent convergence rate.

In addition, the runtime per outer iteration of \gls{TR}-\gls{IRGNM} increases substantially over the course of the optimization. Although the initial iterations are significantly faster compared to \gls{FOM}-\gls{IRGNM}, this advantage gradually diminishes over the iterations. Combined with progressively smaller reductions in the objective functional per iteration, this results in a gradual loss of efficiency of \gls{TR} - \gls{IRGNM}. In particular, for low noise levels where many iterations are required to reach convergence, \gls{TR}-\gls{IRGNM} eventually becomes less efficient than \gls{FOM}-\gls{IRGNM}, in terms of convergence rate and computational cost.

First, the reduced parameter space contains only a limited number of admissible decay directions compared to the full-order space. Consequently, the objective functional exhibits a less pronounced decrease per iteration in the \gls{TR}-\gls{IRGNM} than in the corresponding \gls{FOM}-\gls{IRGNM}. In other words, the reduced model restricts the set of feasible update directions, thereby limiting the achievable reduction of the objective functional in each iteration. The second effect is related to the state reduction for hyperbolic problems. It is well known that standard Galerkin-based reduced-basis methods face intrinsic difficulties for wave-like or transport-dominated dynamics, since the associated solution manifolds typically exhibit slow decay of the Kolmogorov $n$-width~\cite{greif2019decay, arbes2023kolmogorov}. Consequently, a comparatively large number of basis functions is required to obtain an accurate reduced-order approximation over larger regions of the parameter space, and the relevant dynamics cannot be represented efficiently in a low-dimensional reduced space.

\begin{figure}
\centering
\includegraphics[width = \textwidth]{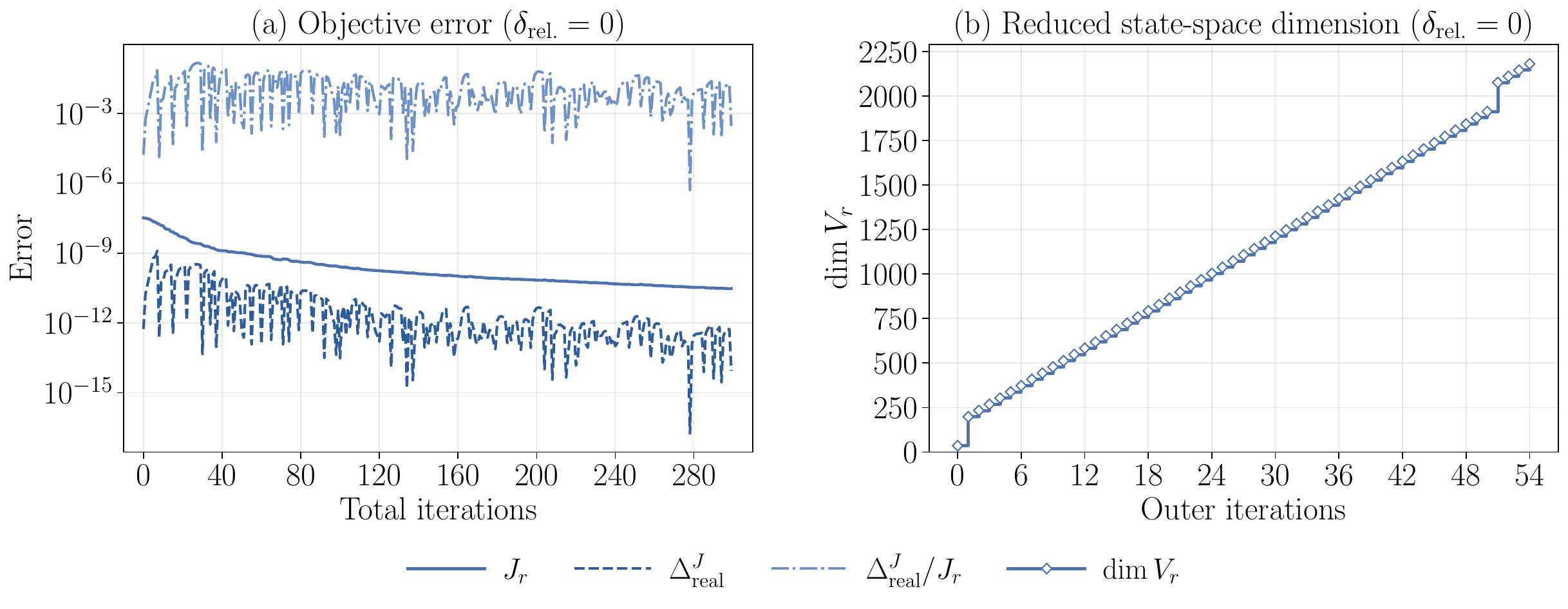}
\caption{(a) For the noise-free case $\delta_{\mathrm{rel.}}=0$, the reduced objective $J_r$ together with the corresponding absolute and relative errors.
(b) For the noise-free case $\delta_{\mathrm{rel.}}=0$, the evolution of the reduced state-space dimension $\dim V_r$ over the outer iterations.}
\label{fig:error_noise_0}
\end{figure}

To support this heuristic interpretation, Figure~\ref{fig:error_noise_0}(a) shows the absolute error
\begin{equation*}
\lvert J_h(q_r^{(i,l)}) - J_r(q_r^{(i,l)}) \rvert =: \Delta^J_{\mathrm{real}},
\end{equation*}
together with the corresponding relative error $\Delta^J_{\mathrm{real}} / J_r(q_r^{(i,l)})$ for the run with $\relnoise = 0$. Figure~\ref{fig:error_noise_0}(b) shows the evolution of the reduced state-space dimension $\dim V_r^\iteridx$ over the outer iterations. Figure~\ref{fig:error_noise_0}(a) shows that the relative objective error decreases sharply immediately after enrichment with the local snapshots $u_h(q_h^{(i)})$ and $p_h(q_h^{(i)})$, but subsequently increases again to a similar level (around $10^{-1}$) as the iterates move away from the corresponding snapshot parameter. Importantly, no systematic decrease of the relative error can be observed within the considered range of reduced dimensions. This indicates that the local enrichments do not yield a globally accurate surrogate model, since the reduced state space remains insufficiently rich to capture the dominant dynamics over the explored parameter region. As a consequence, additional basis enrichment provides only local improvements in approximation quality, as can be observed in Figure~\ref{fig:error_noise_0}(b). In each enrichment step, approximately $30-40$ new vectors are added to the reduced basis. No saturation can be observed, indicating that a substantial fraction of the newly generated snapshots is still required at every enrichment step in order to accurately represent the evolving local dynamics.

\section{Conclusion \& Outlook}
In this work, we have extended the numerical methodology introduced in \cite{kartmann_adaptive_2023, kartmann2025adaptivereducedbasistrust}, which employs adaptive \acrlong{RB} \acrlong{ROM}s within a trust-region-based \gls{IRGNM} procedure, to inverse problems governed by Cauchy’s equation in (hyper)elastic materials. To the best of our knowledge, this is the first application of this framework to hyperbolic PDE-constrained problems.

Numerical experiments motivated by \acrlong{SHM} demonstrate the feasibility of the proposed approach across a range of defect scenarios. Significant reductions in computational cost are achieved while maintaining reconstruction accuracy comparable to those of the full-order IRGNM, both for full-state observations and for sensor-based measurements. The trust-region mechanism provides reliable error control, ensuring close agreement between reduced and full-order solutions and accurate recovery of the main features of the parameter field. 

These findings underline the potential of the approach for more complex applications and real-time monitoring pipelines. At the same time, they reveal intrinsic limitations of classical reduced-basis methods, which restrict achievable speed-ups in more demanding regimes. Future work will therefore focus on the development of more expressive surrogate models, for instance based on data-driven or machine learning techniques, as well as on establishing rigorous convergence guarantees for the proposed framework.

\section*{Statements and declarations}
\bmhead{Funding} Benedikt Klein and Mario Ohlberger acknowledge funding from the Deutsche Forschungsgemeinschaft (DFG, German Research Foundation) under Germany's Excellence Strategy – EXC 2044/2 – 390685587, Mathematics Münster: Dynamics–Geometry–Structure. 

\bmhead{Competing interests} The authors have no relevant financial or non-financial interests to disclose.

\bibliography{biblio}

\newpage
\appendix

\renewcommand{\thefigure}{\thesection.\arabic{figure}}
\setcounter{figure}{0}

\section{Proofs}
\subsection{Proof of Theorem~\ref{thm:state_apost_error_est}}
\label{ssec:proof_state_error_est}

\begin{proof}
The proof follows a standard discrete energy argument; see, e.g.,~\cite{glas2020reduced,bernardi2005time}. For brevity, we introduce the notation
\begin{equation*}
e^{k-1+\zeta} := (1-\zeta)e^{k-1} + \zeta e^k,
\qquad
\dot e^{k-1+\zeta} := (1-\zeta)\dot e^{k-1} + \zeta \dot e^k.
\end{equation*}
Since the full-order solution satisfies the primal scheme exactly and the reduced solution satisfies the reduced scheme, the error satisfies, for all $k \in \mathbb{K}$,
\begin{equation}
\label{eq:error_eqs_fixed}
\begin{aligned}
\frac{1}{\Delta t}\bm M_H(\dot{\bm e}^k - \dot{\bm e}^{k-1})
+ \bm A_h(q_r)\bm e^{k-1+\zeta}
&= \bm r_{\mathrm{pr}}^k(u_r,\dot u_r;q_r), \\[1mm]
\frac{1}{\Delta t}(\bm e^k - \bm e^{k-1})
&= \dot{\bm e}^{k-1+\zeta}.
\end{aligned}
\end{equation}
We test the first equation in~\eqref{eq:error_eqs_fixed} with $\dot{\bm e}^{k-1+\zeta}$ and use the second equation to rewrite the stiffness term:
\begin{equation*}
(\dot{\bm e}^{k-1+\zeta})^\top \bm A_h(q_r)\bm e^{k-1+\zeta}
=
(\bm e^{k-1+\zeta})^\top \bm A_h(q_r)\dot{\bm e}^{k-1+\zeta}
=
\frac{1}{\Delta t}(\bm e^{k-1+\zeta})^\top \bm A_h(q_r)(\bm e^k - \bm e^{k-1}).
\end{equation*}
This yields the energy balance
\begin{equation}
\label{eq:energy_balance_fixed}
\frac{1}{\Delta t}(\dot{\bm e}^{k-1+\zeta})^\top \bm M_H(\dot{\bm e}^k - \dot{\bm e}^{k-1})
+
\frac{1}{\Delta t}(\bm e^{k-1+\zeta})^\top \bm A_h(q_r)(\bm e^k - \bm e^{k-1})
=
(\dot{\bm e}^{k-1+\zeta})^\top \bm r_{\mathrm{pr}}^k(u_r,\dot u_r;q_r).
\end{equation}
Next, we use the identity
\begin{equation*}
\big((1-\zeta)x + \zeta y\big)^\top B(y - x)
=
\frac{1}{2}\big(\|y\|_B^2 - \|x\|_B^2\big)
+
\left(\zeta - \frac{1}{2}\right)\|y - x\|_B^2,
\end{equation*}
which holds for any symmetric positive semidefinite matrix $B$. Since $\zeta \geq \tfrac{1}{2}$, this implies
\begin{equation}
\label{eq:weighted_identity}
\big((1-\zeta)x + \zeta y\big)^\top B(y - x)
\ge
\frac{1}{2}\big(\|y\|_B^2 - \|x\|_B^2\big).
\end{equation}
Applying~\eqref{eq:weighted_identity} with
\begin{equation*}
B = \bm M_H,\quad x = \dot{\bm e}^{k-1},\quad y = \dot{\bm e}^k,
\end{equation*}
and with
\begin{equation*}
B = \bm A_h(q_r),\quad x = \bm e^{k-1},\quad y = \bm e^k,
\end{equation*}
we obtain from~\eqref{eq:energy_balance_fixed}
\begin{equation*}
\frac{1}{2\Delta t}
\left(
\|\dot{\bm e}^k\|_{\bm M_H}^2 - \|\dot{\bm e}^{k-1}\|_{\bm M_H}^2
+
\|\bm e^k\|_{\bm A_h(q_r)}^2 - \|\bm e^{k-1}\|_{\bm A_h(q_r)}^2
\right)
\le
(\dot{\bm e}^{k-1+\zeta})^\top \bm r_{\mathrm{pr}}^k(u_r,\dot u_r;q_r).
\end{equation*}
Introducing the energy norm
\begin{equation*}
\|e^k\|_E^2
:=
\|\dot{\bm e}^k\|_{\bm M_H}^2
+
\|\bm e^k\|_{\bm A_h(q_r)}^2,
\end{equation*}
this yields
\begin{equation}
\label{eq:Ek_step}
\frac{1}{2\Delta t}\big(\|e^k\|_E^2 - \|e^{k-1}\|_E^2\big)
\le
(\dot{\bm e}^{k-1+\zeta})^\top \bm r_{\mathrm{pr}}^k(u_r,\dot u_r;q_r).
\end{equation}
By duality, we estimate the right-hand side as
\begin{equation*}
(\dot{\bm e}^{k-1+\zeta})^\top \bm r_{\mathrm{pr}}^k(u_r,\dot u_r;q_r)
\le
\|r_{\mathrm{pr}}^k(u_r,\dot u_r;q_r)\|_{H_h'}
\, \|\dot e^{k-1+\zeta}\|_{H_h}.
\end{equation*}
Using
\begin{equation*}
\dot e^{k-1+\zeta} = (1-\zeta)\dot e^{k-1} + \zeta \dot e^k,
\end{equation*}
and $\zeta \in [1/2,1]$, we obtain
\begin{equation*}
\|\dot e^{k-1+\zeta}\|_{H_h}
\le
(1-\zeta)\|\dot e^{k-1}\|_{H_h} + \zeta \|\dot e^k\|_{H_h}
\le
\|e^{k-1}\|_E + \|e^k\|_E.
\end{equation*}
Inserting this into~\eqref{eq:Ek_step} yields
\begin{equation*}
\|e^k\|_E^2 - \|e^{k-1}\|_E^2
\le
2\Delta t\, \|r_{\mathrm{pr}}^k(u_r,\dot u_r;q_r)\|_{H_h'}
\big(\|e^{k-1}\|_E + \|e^k\|_E\big).
\end{equation*}
Using the identity
\begin{equation*}
a^2 - b^2 = (a - b)(a + b),
\end{equation*}
with $a = \|e^k\|_E$ and $b = \|e^{k-1}\|_E$, we obtain
\begin{equation*}
(\|e^k\|_E - \|e^{k-1}\|_E)(\|e^k\|_E + \|e^{k-1}\|_E)
\le
2\Delta t\, \|r_{\mathrm{pr}}^k(u_r,\dot u_r;q_r)\|_{H_h'}
(\|e^k\|_E + \|e^{k-1}\|_E).
\end{equation*}
If $\|e^k\|_E + \|e^{k-1}\|_E = 0$, the estimate is trivial. Otherwise, dividing by this term yields
\begin{equation*}
\|e^k\|_E - \|e^{k-1}\|_E
\le
2\Delta t\, \|r_{\mathrm{pr}}^k(u_r,\dot u_r;q_r)\|_{H_h'}.
\end{equation*}
Since $e^0 = 0$ and $\dot e^0 = 0$, we have $\|e^0\|_E = 0$, and summation over $k' = 1, \dots, k$ gives
\begin{equation}
\label{eq:pointwise_bound_fixed}
\|e^k\|_E
\le
2\Delta t \sum_{k'=1}^k \|r_{\mathrm{pr}}^{k'}(u_r,\dot u_r;q_r)\|_{H_h'}.
\end{equation}
Finally, squaring~\eqref{eq:pointwise_bound_fixed}, summing over $k = 1, \dots, K$, and taking the square root yields
\begin{equation*}
\left(\sum_{k=1}^K \|e^k\|_E^2\right)^{1/2}
\le
2\left(
\sum_{k=1}^K
\left(
\Delta t \sum_{k'=1}^k \|r_{\mathrm{pr}}^{k'}(u_r,\dot u_r;q_r)\|_{H_h'}
\right)^2
\right)^{1/2}.
\end{equation*}
This proves~\eqref{eq:state_apost_error_est}.
\end{proof}

\subsection{Proof of Theorem~\ref{thm:J_apost_error_est}}
\label{ssec:proof_obj_error_est}
\begin{proof}
We write
\begin{equation*}
\label{eq:error_est_J_intermed_1}
\begin{aligned}
J_h(q_r) - J_r(q_r)
&=
\Delta t \sum_{k = 1}^K
\left[
\frac{1}{2} (\errprim{k}{\bm})^\top \bm C_h \errprim{k}{\bm}
+
(\errprim{k}{\bm})^\top \bm C_h \bigl(\bm u_r^k - \bm y_h^{\delta,k}\bigr)
\right] \\
&\le
\Delta t \sum_{k = 1}^K
\left[
\frac{\|\mathcal C\|^2_{\mathcal L(V,C)}}{2}\,\|\errprim{k}{}\|_{V_h}^2
+
\|\mathcal C u_r^k - y_h^{\delta,k}\|_{C_h}\,\|\errprim{k}{}\|_{V_h}
\right].
\end{aligned}
\end{equation*}
By the G{\aa}rding inequality~\eqref{eq:garding}, the Poincar\'e inequality, and Assumption~\ref{ass:ass_to_stored_energy}, there holds
\begin{equation*}
\|v\|_V^2 \le \frac{1}{a_{q_r}} \langle A(q_r)v,v\rangle_{V',V},
\qquad a_{q_r} = \tilde{\kappa}/C_P^2 > 0.
\end{equation*}
Hence,
\begin{equation*}
\|\errprim{k}{}\|_{V_h}^2
\le
\frac{1}{a_{q_r}} (\errprim{k}{\bm})^\top \bm A_h(q_r)\errprim{k}{\bm}.
\end{equation*}
Applying Cauchy--Schwarz and using the definition of $J_r(q_r)$ yields~\eqref{eq:J_apost_error_est}.
\end{proof}


\newpage
\section{Reconstructed parameter fields}
\subsection{Large Inclusions}

\begin{figure}[H]
    \centering
    \includegraphics[width=\textwidth]{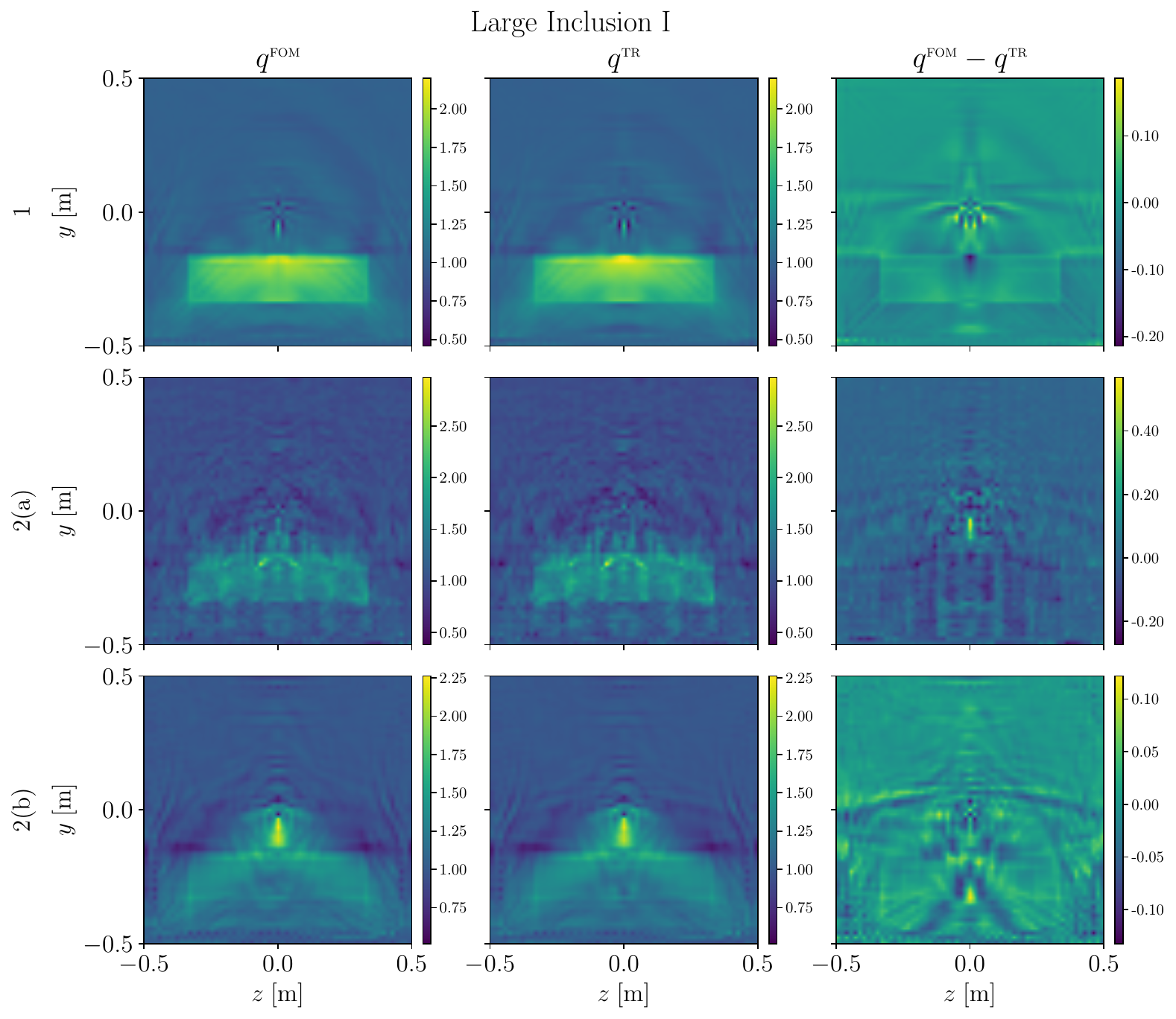}
    \caption{
    Reconstructed parameter field for the large inclusion I (stiffened case, $\tilde{q}=2.0$).
    Rows correspond to Experiments~1,~2(a), and~2(b), while columns show the parameter fields obtained by
    \gls{FOM}-\gls{IRGNM}, $q^{\text{\footnotesize FOM}}$, \gls{TR}-\gls{IRGNM}, $q^{\text{\footnotesize TR}}$, and their difference.
    }
    \label{fig:elasticity_alu_large_patch_stiff}
\end{figure}

\begin{figure}[H]
    \centering
    \includegraphics[width=\textwidth]{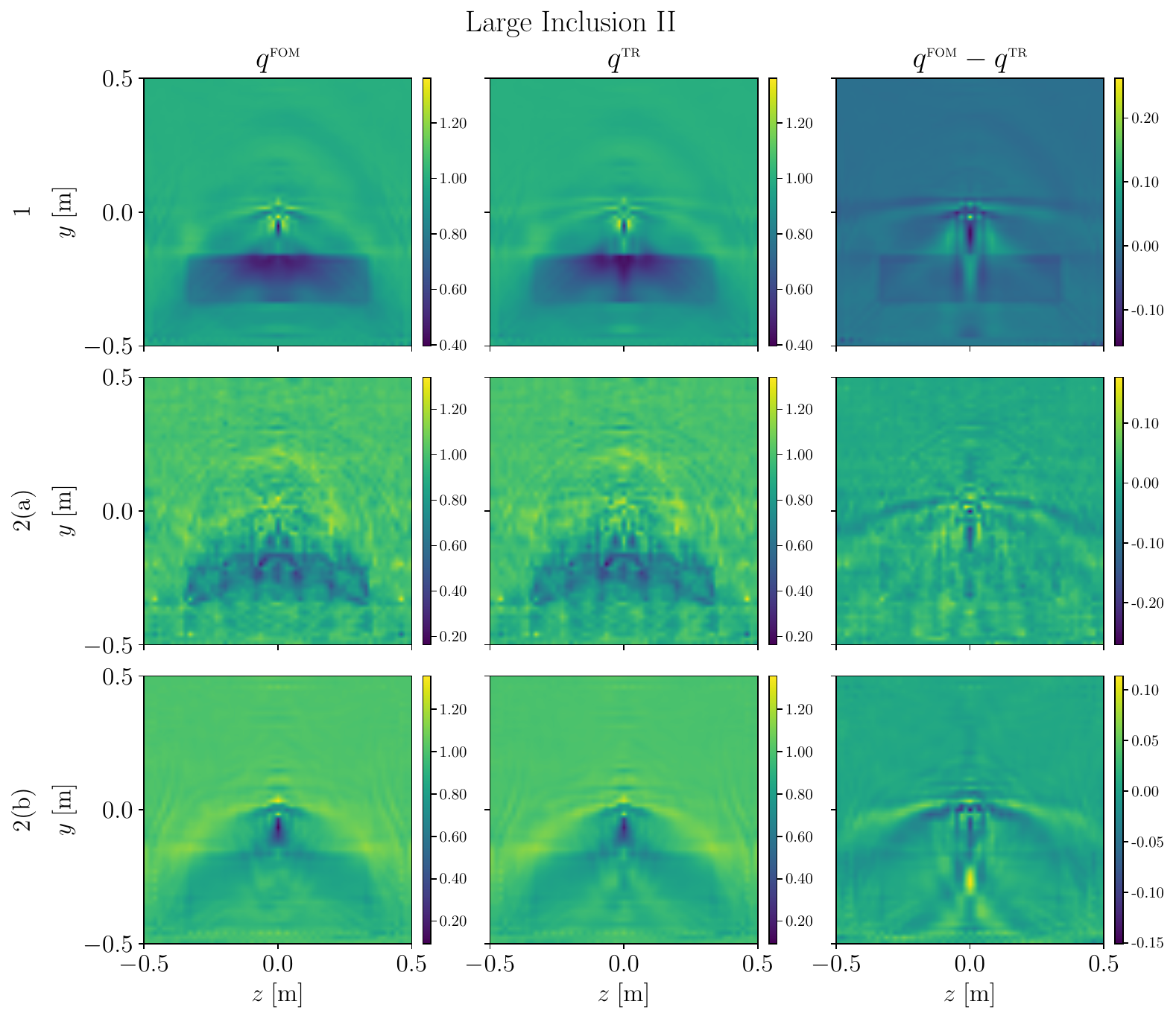}
    \caption{
    Reconstructed parameter field for the large inclusion II (softened case, $\tilde{q}=0.5$).
    Rows correspond to Experiments~1,~2(a), and~2(b), while columns show the parameter fields obtained by
    \gls{FOM}-\gls{IRGNM}, $q^{\text{\footnotesize FOM}}$, \gls{TR}-\gls{IRGNM}, $q^{\text{\footnotesize TR}}$, and their difference.
    }
    \label{fig:elasticity_alu_large_patch_soft}
\end{figure}

\FloatBarrier

\subsection{Point defects}

\begin{figure}[H]
    \centering
    \includegraphics[width=\textwidth]{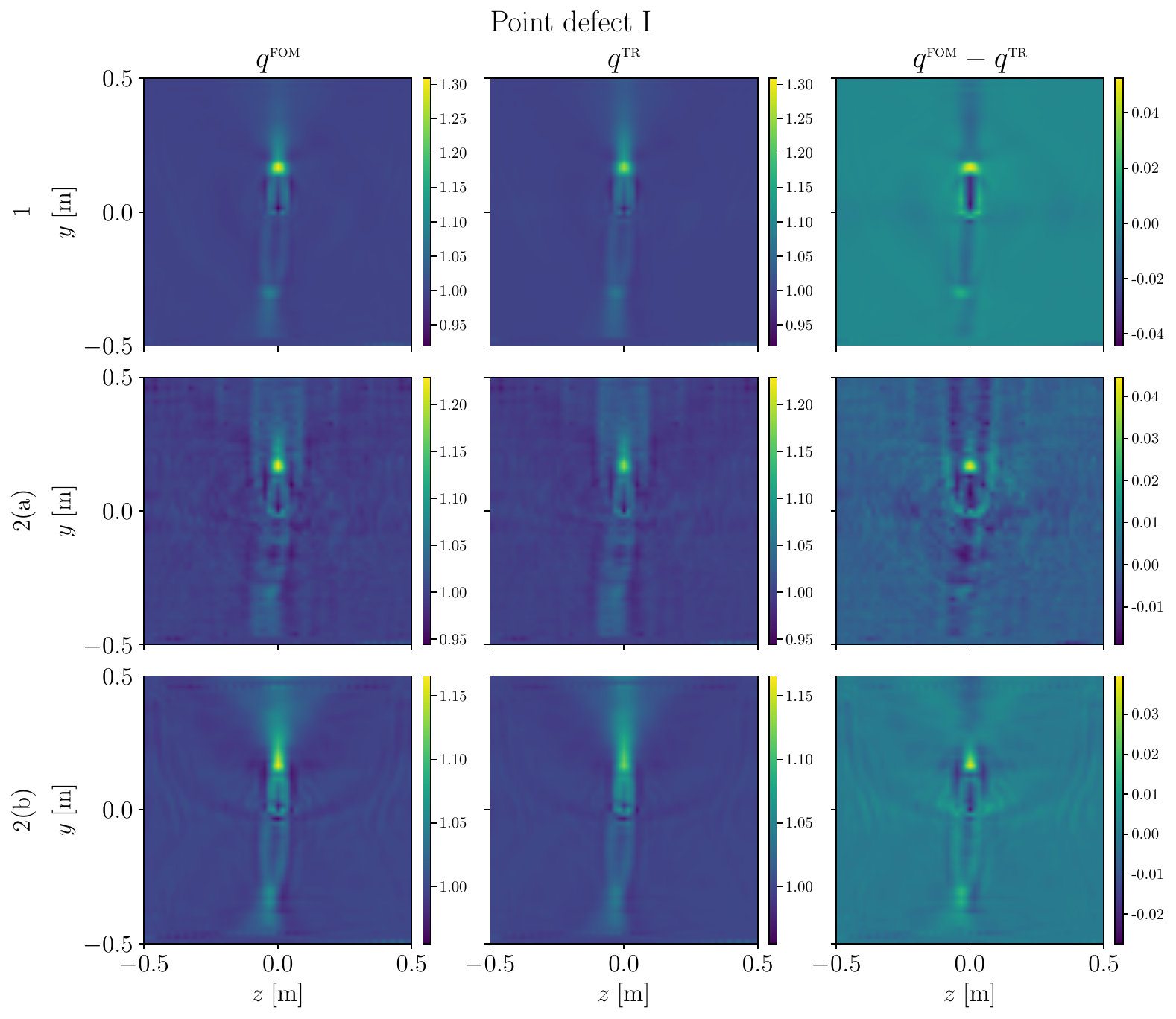}
    \caption{
    Reconstructed parameter field for point defect setup I. Rows correspond to Experiments~1,~2(a), and~2(b), while columns show the parameter fields obtained by
    \gls{FOM}-\gls{IRGNM}, $q^{\text{\footnotesize FOM}}$, \gls{TR}-\gls{IRGNM}, $q^{\text{\footnotesize TR}}$, and their difference.
    }
    \label{fig:elasticity_alu_point_defects_I}
\end{figure}

\end{document}